\newtheorem{theorem}{Theorem}
\newtheorem{remark}{Remark}
\newtheorem{problem}{Problem}
\newtheorem{definition}{Definition}
\newtheorem{lemma}{Lemma} 
\newtheorem{corollary}{Corollary}
\crefname{problem}{Problem}{Problems}
\Crefname{problem}{Problem}{Problems}
\newcommand{\lag}{\left \langle}
\newcommand{\rog}{\right \rangle}
\newcommand{\df}{\stackrel{\text{\tiny def}}{=}}
\crefname{problem}{Problem}{Problems}
\Crefname{problem}{Problem}{Problems}
\begin{document}
\date{}
\title{Trakhtenbrot theorem and first-order axiomatic extensions of MTL}
\author{Matteo Bianchi}
\affil{Department of Computer Science, Università degli Studi di Milano, Via Comelico 39/41, 20135, Milano, Italy \\\url{matteo.bianchi@unimi.it}}
\maketitle
\begin{abstract}
In 1950, B.A. Trakhtenbrot showed that the set of first-order tautologies associated to finite models is not recursively enumerable. In 1999, P. H\'ajek generalized this result to the first-order versions of \L ukasiewicz, G\"odel and Product logics. In this paper we extend the analysis to the first-order axiomatic extensions of MTL. Our main result is the following. Let L be an axiomatic extension L of MTL s.t. TAUT$_\text{L}$ is decidable, and whose corresponding variety is generated by a chain: for every generic L-chain $\mathcal{A}$ the set fTAUT$^\mathcal{A}_{\forall}$ (the set of first-order tautologies associated to the finite $\mathcal{A}$-models) is $\Pi_1$. Moreover, if in addition L is an extension of BL or an extension of SMTL or an extension of WNM, then for every generic L-chain $\mathcal{A}$ the set fTAUT$^\mathcal{A}_{\forall}$ is $\Pi_1$-complete. More in general, for every axiomatic extension L of MTL s.t. TAUT$_\text{L}$ is decidable there is no L-chain $\mathcal{A}$ such that L$\forall$ is complete w.r.t. the class of finite $\mathcal{A}$-models. We have negative results also if we expand the language with the $\Delta$ operator.
\end{abstract}
\section{Introduction and motivations}
In \cite{trak1}, B.A. Trakhtenbrot showed that the set of first-order tautologies associated to finite models is not recursively enumerable, in classical first-order logic: moreover, it is known that such set is $\Pi_1$-complete (in \cite{vau,bgg} it is shown that the theorem works also with languages containing at least a binary predicate, and without equality). This result implies the fact that the completeness w.r.t. finite models does not hold, in first-order logic (indeed, the set of theorems of classical predicate logic is $\Sigma_1$-complete). One can ask if a similar result holds also in non-classical logics, for example many-valued logics. A first answer was given in \cite{trak} by P. H\'ajek, that generalized Trakhtenbrot theorem to the first-order versions of \L ukasiewicz, G\"odel and Product logics (with respect to their standard algebras): that paper was published in 1999, and from then a much larger family of many-valued logics has been introduced, in particular the monoidal t-norm based logic MTL and its extensions (\cite{eg,hand}). These logics extend the well known full Lambek calculus, and they are all algebraizable in the sense of \cite{bp}: in particular, the semantics related to each logic forms an algebraic variety.

Differently to what happens in classical logic, in these many-valued logics we do not have necessarily a single totally ordered algebraic structure in which we can evaluate the truth-values of a formula: in particular, if L is an axiomatic extension of MTL, the existence of an L-chain w.r.t. the logic is complete to is called single chain completeness (SCC). Not all the axiomatic extensions of MTL enjoy this property: in \cite{ssc1} an extensive study has been done, about the SCC.	

For every axiomatic extension L of MTL, we have a completeness theorem w.r.t. the class of L-algebras. In the first-order case, however, we need to restrict to totally ordered algebras: indeed, if not, the soundness does not necessarily holds, see \cite[Example 5.4]{eghm} for a counterexample over G\"odel logic. This is not by chance, but it is a consequence of the fact that such logics are axiomatized in the way to have the completeness w.r.t. the class of all chains (such development of first-order logics has many connections with the works of Mostowski and Rasiowa, as explained in \cite{hmr}). So, here the analysis of single chain completeness becomes even more justified, than in the propositional case. However, such a study is also (much) harder than in the propositional case, as pointed out in \cite{ssc1}.

In this article we show a generalized version of Trakhtenbrot theorem. Our main result is the following. Let L be an axiomatic extension L of MTL s.t. TAUT$_\text{L}$ is decidable, and whose corresponding variety is generated by a chain: for every generic L-chain $\mathcal{A}$ the set fTAUT$^\mathcal{A}_{\forall}$ (the set of first-order tautologies associated to the finite $\mathcal{A}$-models) is $\Pi_1$. Moreover, if in addition L is an extension of BL or an extension of SMTL or an extension of WNM, then for every generic L-chain $\mathcal{A}$ the set fTAUT$^\mathcal{A}_{\forall}$ is $\Pi_1$-complete.  
As a corollary, we have that if L is one of BL, BL$_n$, {\L}, {\L}$_n$, G, G$_n$, $\Pi$, SMTL, SBL, SBL$^n$, SBL$_n$,WNM, NM, NMG, RDP, DP, and $\mathcal{A}$ is a generic L-chain, then fTAUT$^\mathcal{A}_{\text{L}\forall}$ is $\Pi_1$-complete. 

We also show that, for every axiomatic extension L of MTL s.t. TAUT$_\text{L}$ is decidable, there is no L-chain $\mathcal{A}$ such that L$\forall$ is complete w.r.t. the class of finite $\mathcal{A}$-models. So, the (first-order) single chain completeness w.r.t. finite models fails to hold.

We conclude by discussing the expansions with the $\Delta$ operator.
\section{Some basic background}
We assume that the reader is familiar with monoidal t-norm based logics and its extensions, in the propositional and in the first-order case. For a reference, see \cite{hand,haj,eg,ch}.
\subsection{Syntax}
The language of MTL is based over the connectives $\{\land,\&,\to,\bot\}$: the formulas are built in the usual inductive way from these connectives, and a denumerable set of variables.

Useful derived connectives are the following:
\begin{align}
\tag{negation}\neg\varphi\df& \varphi\to\bot\\
\tag{disjunction}\varphi\vee\psi\df& ((\varphi\to\psi)\to\psi)\land((\psi\to\varphi)\to\varphi)\\
\tag{biconditional}\varphi\leftrightarrow\psi\df&(\varphi\to\psi)\land(\psi\to\varphi)
\end{align}
MTL can be axiomatized with a Hilbert style calculus: for the reader's convenience, we list the axioms of MTL:
\begin{align}
\tag{A1}&(\varphi \rightarrow \psi)\rightarrow ((\psi\rightarrow \chi)\rightarrow(\varphi\rightarrow \chi))\\
\tag{A2}&(\varphi\&\psi)\rightarrow \varphi\\
\tag{A3}&(\varphi\&\psi)\rightarrow(\psi\&\varphi)\\
\tag{A4}&(\varphi\land\psi)\rightarrow \varphi\\
\tag{A5}&(\varphi\land\psi)\rightarrow(\psi\land\varphi)\\
\tag{A6}&(\varphi\&(\varphi\rightarrow \psi))\rightarrow (\psi\land\varphi)\\
\tag{A7a}&(\varphi\rightarrow(\psi\rightarrow\chi))\rightarrow((\varphi\&\psi)\rightarrow \chi)\\
\tag{A7b}&((\varphi\&\psi)\rightarrow \chi)\rightarrow(\varphi\rightarrow(\psi\rightarrow\chi))\\
\tag{A8}&((\varphi\rightarrow\psi)\rightarrow\chi)\rightarrow(((\psi\rightarrow\varphi)\rightarrow\chi)\rightarrow\chi)\\
\tag{A9}&\bot\rightarrow\varphi
\end{align}
As inference rule we have modus ponens:
\begin{equation}
\tag{MP}\frac{\varphi\quad \varphi\rightarrow\psi}{\psi}
\end{equation}
An axiomatic extension of MTL is a logic obtained by adding one or more axiom schemata to it. A theory is a set of formulas: the notion of proof and logical consequence are defined as in the classical case.

In this paper we focus on some extensions of MTL: in particular, BL, $\Pi$, {\L}, {\L}$_n$, BL$_n$, SMTL, SBL, SBL$^n$, SBL$_n$, WNM, RDP, NMG, G, G$_n$, NM, DP\footnote{This logic was introduced in \cite{nog,hnp}, and called S$_3$MTL. In \cite{abv} it has been further analysed under the name DP, for it is the logic of drastic product chains.} (see \cite{haj,prod,mun,grig,bln,rat,sbl,eg,rdp,nog,hnp,abv,ciaeg,dum,dm} for details). The logics WNM, G, DP, SMTL are axiomatized as MTL plus, respectively:\footnote{The notation $\varphi^n$ indicates $\underbrace{\varphi\&\dots\&\varphi}_{n\mbox{ }\text{times}}$.}
\begin{align*}
\tag*{(wnm)}&\neg(\varphi\&\psi)\vee((\varphi\land\psi)\to(\varphi\&\psi)).\\
\tag*{(id)}&\varphi\to(\varphi\&\varphi).\\
\tag*{(dp)}&\varphi\vee\neg(\varphi\&\varphi).\\
\tag*{(s)}&\neg(\neg\varphi\land\varphi).
\end{align*}
For $n\geq2$, G$_n$ is axiomatized as G plus:
\begin{equation*}
\tag*{(g$_n$)}\bigvee_{i<n}(x_i\to x_{i+1}).
\end{equation*}
RDP, NMG, NM are axiomatized as WNM plus, respectively\footnote{Usually, NMG is axiomatized as MTL plus $(\neg\neg \varphi\to \varphi)\vee ((\varphi\land \psi) \to (\varphi\&\psi))$. Here, we use the more compact axiomatization introduced in \cite{ab}.}:
\begin{align*}
\tag*{(rdp)}&(\varphi\to\neg\varphi)\vee\neg\neg \varphi.\\
\tag*{(nmg)}&(\neg\neg \varphi\to \varphi)\vee \neg\neg\varphi.\\
\tag*{(inv)}&\neg\neg \varphi\to \varphi.\label{inv}
\end{align*}
BL is axiomatized as MTL plus:
\begin{equation*}
\tag*{(div)}(\varphi\land\psi)\to(\varphi\&(\varphi\to\psi)).
\end{equation*}
SBL, $\Pi$, \L\, are axiomatized as BL plus, respectively:
\begin{align*}
\tag*{(s)}&\neg(\neg\varphi\land\varphi).\\
\tag*{(c)}&\neg\varphi\vee ((\varphi\to(\varphi\&\psi))\to\psi).\\
\tag*{(inv)}&\neg\neg \varphi\to \varphi.
\end{align*}
SBL$^n$ is axiomatized as SBL plus:
\begin{equation*}
\tag*{($c_n$)}\varphi^{n}\to\varphi^{n+1}.\label{eq:cn}
\end{equation*}
SBL$_n$ is axiomatized as SBL$^n$ plus the following set of axiom schemata.
\begin{equation*}
\tag*{($d_{n,m}$)}(\varphi^{m-1}\leftrightarrow(\varphi\to\varphi^n))^n\to\varphi^n.\label{eq:dn}
\end{equation*}
for every $m<n$ such that $m$ does not divide $n$.

BL$_n$ is axiomatized as BL plus \ref{eq:cn} and \ref{eq:dn}, for every $m<n$ such that $m$ does not divide $n$.

\L$_n$ is axiomatized as BL$_n$ plus \ref{inv}.
\subsection{Semantics}
An MTL-algebra is an algebra $\lag A,*,\Rightarrow,\sqcap,\sqcup,0,1\rog$ such that:
\begin{enumerate}
\item $\lag A,\sqcap,\sqcup, 0,1\rog$ is a bounded lattice with minimum $0$ and maximum $1$.
\item $\lag A,*,1 \rog$ is a commutative monoid.
\item $\lag *,\Rightarrow \rog$ forms a \emph{residuated pair}: $z*x\leq y$ iff $z\leq x\Rightarrow y$ for all $x,y,z\in A$.
\item The following axiom holds, for all $x,y\in A$:
\begin{equation}
\tag{Prelinearity}(x\Rightarrow y)\sqcup(y\Rightarrow x)=1
\end{equation}
A totally ordered MTL-algebra is called MTL-chain. An MTL-algebra is called \emph{standard} whenever its support is $[0,1]$: it is well known (see \cite{eg,beg}) that this is the case if and only if $*$ is a left-continuous t-norm (see \cite{kmp} for a monograph on t-norms).
\end{enumerate}
In the rest of the paper the notation $\sim x$ will denote $x\Rightarrow 0$.

Given an MTL-chain $\mathcal{A}$, we define $A^+=\{x\in A:\, x>\sim x\}$. A negation fixpoint is an element such that $x=\sim x$: an easy check shows that, if an MTL-chain has a such element, then it is unique.

Let L be an axiomatic extension of MTL. It is known (see \cite{nog,dist}) that L is algebraisable in the sense of \cite{bp}, and that the equivalent algebraic semantics forms a subvariety of MTL-algebras, called L-algebras.	 We will denote by $\mathbb{L}$ such variety.
On the other hand, each subvariety $\mathbb{L}$ of $\mathbb{MTL}$ is algebraisable, and we will denote by L the corresponding axiomatic extension of MTL.

In particular L is the extension of MTL via a set of axioms $\{\varphi\}_{i \in I}$ if and only if $\mathbb{L}$ is the subvariety
of MTL-algebras satisfying $\{\bar{\varphi} = 1\}_{i \in I}$, where $\bar{\varphi}$ is obtained from $\varphi$ by replacing
each occurrence of $\&,\to,\land,\vee,\neg,\bot$ with $*,\Rightarrow,\sqcap,\sqcup,\sim,0$, and every formula symbol occurring in
$\varphi$ with an individual variable.

We recall that the standard MV-algebra $[0,1]_\text{\L}$ is an MTL-chain with $[0,1]$ as support, and such that, for every $x,y\in [0,1]$:
\begin{equation*}
x*y=\max\{0,x+y-1\}\qquad x\Rightarrow y=\min\{1,1-x+y\}.
\end{equation*}
Moreover every MV-chain of $n+1$ elements is isomorphic to the subalgebra of $[0,1]_\text{\L}$ having $\{0,\frac{1}{n},\dots,\frac{n-1}{n},1\}$ as support. This algebra will be called $\mathbf{L}_n$, and MV$_n$ its generated variety. It is known that (see \cite{grig,mun}) an MV-chain belong to MV$_n$ if and only if it is isomorphic to $\mathbf{L}_k$, with $k$ that divides $n$.

Moving to the case of WNM, we recall (see \cite{nog}) that in every WNM-chain the operations $*$ and $\Rightarrow$ have this form:
\begin{equation}\label{eq:op}
x*y=\begin{cases}
0&\text{if }x\leq \sim y\\
\min\{x,y\}&\text{otherwise.}
\end{cases}\qquad
x\Rightarrow y=\begin{cases}
1&\text{if }x\leq y\\
\max\{\sim x,y\}&\text{otherwise.}
\end{cases}
\end{equation}
In particular, an easy check shows that if $\mathcal{A}$ is a WNM-chain, then every element $x\in A^+$ is idempotent, i.e. $x*x=x$.

Finally, the notions of evaluation, tautology and completeness are defined in the usual way. 

Let L be an axiomatic extension of MTL: we say that an L-chain $\mathcal{A}$ is generic whenever the variety generated by $\mathcal{A}$ is $\mathbb{L}$, i.e. L is complete w.r.t. $\mathcal{A}$ (for every formula $\varphi$, $\vdash_\text{L}\varphi$ iff $\mathcal{A}\models\varphi$).
\subsection{First-order case}
In this section we briefly present the first-order versions of MTL and its axiomatic extensions: more details can be found in \cite{ch,hand}.
\begin{definition}\label{lang}
A first-order language is a \emph{countable} set $\mathbf{P}$ of predicate symbols, containing \emph{at least} a binary one (i.e. we do not work with monadic fragments). To simplify our analysis we overlook constant, function symbols, and we work without equality. We have the ``classical'' quantifiers $\forall,\exists$. The notions of term (note that our terms coincide with variables), formula, closed formula, term substitutable in a formula are defined like in the classical case (\cite{ch,hand}); the connectives are those of the propositional level.
\end{definition}
Let L be an axiomatic extension of MTL: then its first-order version, L$\forall$, is axiomatized as follows:
\begin{itemize}
\item The axioms resulting from the axioms of L by the substitution of the propositional variables by the first-order formulas.
\item The following axioms:
\begin{align}
\tag{$\forall 1$}&(\forall x)\varphi(x)\rightarrow \varphi(x/t)\text{( }t\text{ substitutable for }x\text{ in }\varphi(x)\text{)}\\
\tag{$\exists 1$}&\varphi(x/t)\rightarrow (\exists x)\varphi(x)\text{( }t\text{ substitutable for }x\text{ in }\varphi(x)\text{)}\\
\tag{$\forall 2$}&(\forall x)(\nu \rightarrow \varphi)\rightarrow (\nu \rightarrow (\forall x)\varphi)\text{ (}x\text{ not free in }\nu\text{)}\\
\tag{$\exists 2$}&(\forall x)(\varphi \rightarrow \nu)\rightarrow ((\exists x)\varphi\rightarrow \nu)\text{ (}x\text{ not free in }\nu\text{)}\\
\tag{$\forall 3$}&(\forall x)(\varphi \vee \nu)\rightarrow ((\forall x)\varphi \vee \nu)\text{ (}x\text{ not free in }\nu\text{)}
\end{align}
\end{itemize}
The rules of L$\forall$ are: Modus Ponens: $\frac{\varphi\quad\varphi\to\psi}{\psi}$ and Generalization: $\frac{\varphi}{(\forall x)\varphi}$.
\paragraph*{}
As regards to semantics, we need to restrict to L-chains: given an L-chain $\mathcal{A}$, a finite $\mathbf{A}$-model is a structure $\mathbf{M}=\lag M,\{r_P\}_{P\in \mathbf{P}}\rog$, where:
\begin{itemize}
\item M is a \emph{finite} non-empty set.
\item for each $P\in \mathbf{P}$ of arity\footnote{If $P$ has arity zero, then $r_P\in A$.} $n$, $r_P:M^n\to A$.
\end{itemize}
 For each evaluation over variables $v:Var\to M$, the truth value of a formula $\varphi$ ($\Vert\varphi\Vert_{\mathbf{M},v}^\mathcal{A}$) is defined inductively as follows:
\begin{itemize}
 \item $\Vert P(x_1,\dots,x_n)\Vert_{\mathbf{M},v}^\mathcal{A}=r_P(v(x_1),\dots,v(x_n))$.
\item The truth value commutes with the connectives of L$\forall$, i.e.
\begin{align*}
\Vert\varphi\rightarrow\psi\Vert^\mathcal{A}_{\mathbf{M},v}&=\Vert\varphi\Vert^\mathcal{A}_{\mathbf{M},v}\Rightarrow\Vert\psi\Vert^\mathcal{A}_{\mathbf{M},v}\\
\Vert\varphi\&\psi\Vert^\mathcal{A}_{\mathbf{M},v}&=\Vert\varphi\Vert^\mathcal{A}_{\mathbf{M},v}*\Vert\psi\Vert^\mathcal{A}_{\mathbf{M},v}\\
\Vert \bot \Vert^\mathcal{A}_{\mathbf{M},v}&=0\\ \Vert\varphi\land\psi\Vert^\mathcal{A}_{\mathbf{M},v}&=\Vert\varphi\Vert^\mathcal{A}_{\mathbf{M},v}\sqcap\Vert\psi\Vert^\mathcal{A}_{\mathbf{M},v}\\
\Vert\varphi\vee\psi\Vert^\mathcal{A}_{\mathbf{M},v}&=\Vert\varphi\Vert^\mathcal{A}_{\mathbf{M},v}\sqcup\Vert\psi\Vert^\mathcal{A}_{\mathbf{M},v}.
\end{align*}
\item $\Vert(\forall x)\varphi\Vert_{\mathbf{M},v}^\mathcal{A}=\min\{\Vert\varphi\Vert_{M,v'}^\mathcal{A}:\ v'\equiv_x v$, i.e. $v'(y)=v(y)$ for all variables except for $x\}$
\item $\Vert(\exists x)\varphi\Vert_{\mathbf{M},v}^\mathcal{A}=\max\{\Vert\varphi\Vert_{M,v'}^\mathcal{A}:\ v'\equiv_x v$, i.e. $v'(y)=v(y)$ for all variables except for $x\}$.
\end{itemize}
\begin{remark}
Usually, the last two cases are defined by taking, respectively, $\inf$'s and $\sup$'s of truth values: since these $\inf$'s and $\sup$'s do not necessarily exist, we have to introduce the notion of safe model, if we drop the requirement that the model is finite. Conversely, every finite model is safe, and in particular it is also witnessed, in the sense of \cite{wit}: for this reason we can take $\min$ and $\max$.
\end{remark}
Let $\varphi(x_1,\dots,x_k)$ be a formula (i.e. a formula having $x_1,\dots,x_k$ as free variables), $\mathcal{A}$ be an MTL-chain, and $\mathbf{M}$ be a finite $\mathcal{A}$-model. With the notation $\Vert \varphi(a_1,\dots,a_k) \Vert^\mathcal{A}_{\mathbf{M}}$, with $a_1,\dots,a_k\in M$, we indicate $\Vert \varphi(x_1,\dots,x_k) \Vert^\mathcal{A}_{\mathbf{M},v}$, with $v(x_i)=a_i$.

\medskip
Let L be an axiomatic extension of MTL, and $\mathcal{A}$ be an L-chain. We say that L$\forall$ is complete w.r.t. the class of finite $\mathcal{A}$-models, whenever, for every (first-order) formula $\varphi$:
\begin{equation*}
\vdash_{\text{L}\forall}\varphi\qquad\text{iff}\qquad \Vert\varphi\Vert_{\mathbf{M},v}^\mathcal{A}=1,	
\end{equation*}
for every finite $\mathcal{A}$-model $\mathbf{M}$, and evaluation $v$.
\section{Incompleteness results}
In this section we present the first results generalizing Trakhtenbrot theorem. 

We begin by introducing a particular case of single chain completeness: this notion was initially studied in \cite{ssc1}.
\begin{definition}
Let L be an axiomatic extension of MTL. If there is an L-chain $\mathcal{A}$ such that L$\forall$ is complete w.r.t. the finite models of $\mathcal{A}$, then we say that L$\forall$ enjoys the finite single chain completeness (fSCC).
\end{definition}
Concerning the computational complexity, we define the following:
\begin{definition}
Let L be an axiomatic extension of MTL, and $\mathcal{A}$ be an L-chain. With TAUT$^\mathcal{A}$ we denote the set of tautologies associated to $\mathcal{A}$, and with  TAUT$_\text{L}$ we denote the set of tautologies associated to all L-chains. Moving to the first-order case, with fTAUT$^\mathcal{A}_\forall$ we denote the set of first-order tautologies associated to the finite models of $\mathcal{A}$.
\end{definition}
Clearly, if $\mathcal{A}$ is a generic chain for the variety of L-algebras, then TAUT$^\mathcal{A}=$TAUT$_\text{L}$.
In the next section we will show that if L is an extension of MTL s.t. TAUT$_\text{L}$ is decidable, then the fSCC fails to hold for L$\forall$.

This is a generalized version of the Trakhtenbrot theorem presented in \cite{trak1}, and subsequently extended to the many-valued case in \cite{trak}.

Moreover, in the next section, we will study more in detail the arithmetical complexity of the fTAUT$_\forall$ problem for many (first-order) axiomatic extensions of MTL, by showing that for a large family of logics it is $\Pi_1$-complete.

We start by recalling the classical Trakhtenbrot theorem: in the rest of the paper, with $\mathbf{2}$ we denote the two elements boolean algebra.
\begin{theorem}[\cite{trak1,vau,bgg}]\label{trak}
The set fTAUT$^\mathbf{2}_\forall$ is $\Pi_1$-complete.
\end{theorem}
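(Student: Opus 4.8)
The plan is to establish the two halves of $\Pi_1$-completeness separately: membership in $\Pi_1$, and $\Pi_1$-hardness. For membership, observe that a sentence $\varphi$ fails to belong to fTAUT$^\mathbf{2}_\forall$ precisely when there exists a finite $\mathbf{2}$-model $\mathbf{M}$ and an evaluation $v$ with $\Vert\varphi\Vert_{\mathbf{M},v}^\mathbf{2}\neq 1$. Every finite $\mathbf{2}$-model over a countable language (restricted to the finitely many predicate symbols actually occurring in $\varphi$) can be coded by a natural number, and evaluating $\Vert\varphi\Vert_{\mathbf{M},v}^\mathbf{2}$ is a terminating computation, since on a finite domain the $\min$ and $\max$ in the clauses for $\forall$ and $\exists$ range over finite sets. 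Hence the complement of fTAUT$^\mathbf{2}_\forall$ is $\Sigma_1$ (enumerate finite models and decidably test for refutation), so fTAUT$^\mathbf{2}_\forall\in\Pi_1$.

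For hardness I would reduce from the complement of the halting problem, which is $\Pi_1$-complete. Given a Turing machine $T$, the goal is to build, effectively, a first-order sentence $\mathrm{Comp}_T$ whose finite models are exactly the halting computations of $T$. Use a binary predicate to impose a discrete linear order with least and greatest elements on the domain, and read off from it a grid indexed by time steps and tape cells, so that points of the model code (time, cell) pairs. Then express as first-order axioms: the correct initial configuration on the input, the local transition constraints of $T$ relating each non-terminal time row to its successor row, and the occurrence of a halting state. If $T$ halts, its finite run furnishes a finite model of $\mathrm{Comp}_T$; conversely, any finite model of $\mathrm{Comp}_T$ forces the coded computation to terminate. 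Thus $\mathrm{Comp}_T$ is finitely satisfiable iff $T$ halts, and passing to the negation, $\neg\mathrm{Comp}_T$ lies in fTAUT$^\mathbf{2}_\forall$ iff $T$ does not halt. This gives a computable many-one reduction from a $\Pi_1$-complete set to fTAUT$^\mathbf{2}_\forall$, completing the hardness argument.

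The main obstacle is carrying out this encoding under the constraints of \Cref{lang}: the language carries no equality, no constants and no function symbols, and we must work with essentially a single binary predicate. Simulating equality and the successor and ordering apparatus from one binary relation, while still pinning down computations tightly enough that finite satisfiability coincides exactly with halting, is the delicate step; this is precisely the strengthening of Trakhtenbrot's original argument obtained in \cite{vau,bgg}, whose constructions I would invoke to discharge the encoding.
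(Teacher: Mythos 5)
The paper does not actually prove this theorem: it is quoted as the classical Trakhtenbrot result, with \cite{trak1,vau,bgg} cited precisely because the latter two references supply the strengthening to equality-free, purely relational languages with a single binary predicate, which is the form needed under \Cref{lang}. Your outline---membership via recursive enumeration of finite $\mathbf{2}$-models with decidable evaluation, hardness via a computable reduction encoding halting computations of a Turing machine as finite models, with the delicate equality-free relational encoding explicitly deferred to those same references---is the correct standard argument and matches the paper's treatment, which rests on exactly those sources.
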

Moving to the case of axiomatic extensions of MTL, we have that:
\begin{theorem}\label{main}
Let L be an axiomatic extension of MTL that is complete w.r.t. a chain $\mathcal{A}$, and such that TAUT$_\text{L}$ is decidable. Then fTAUT$^\mathcal{A}_\forall$ is $\Pi_1$. 
\end{theorem}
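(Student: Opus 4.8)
The plan is to show that membership in fTAUT$^\mathcal{A}_\forall$ can be written in the form ``$\forall m\,[\,\text{recursive predicate of }(\varphi,m)\,]$'', i.e.\ as a single universal numerical quantifier over a decidable matrix, which is exactly the $\Pi_1$ form. The decidable matrix will assert that a certain \emph{propositional} formula, obtained by unfolding $\varphi$ over a domain of size $m$, is a tautology of $\mathcal{A}$. Decidability of the matrix will come from the hypothesis that TAUT$_\text{L}$ is decidable together with completeness of L w.r.t.\ $\mathcal{A}$, which gives TAUT$^\mathcal{A}=$ TAUT$_\text{L}$.

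First I would reduce to sentences: since $\Vert\varphi\Vert^\mathcal{A}_{\mathbf{M},v}=1$ for every $v$ iff $\Vert(\forall\bar x)\varphi\Vert^\mathcal{A}_{\mathbf{M}}=1$, we have $\varphi\in$ fTAUT$^\mathcal{A}_\forall$ iff $(\forall\bar x)\varphi\in$ fTAUT$^\mathcal{A}_\forall$, so we may assume $\varphi$ closed. Next, for each $m\ge 1$ fix the canonical domain $M_m=\{1,\dots,m\}$; up to isomorphism every finite $\mathcal{A}$-model of cardinality $m$ has this domain, and the truth value of $\varphi$ depends only on $m$ and on the values $r_P(\bar a)\in A$. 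I would then define, recursively and effectively in $(\varphi,m)$, a propositional formula $\hat\varphi_m$: replace each subformula $(\forall x)\psi$ by $\bigwedge_{i=1}^m \psi(x/i)$ and each $(\exists x)\psi$ by $\bigvee_{i=1}^m \psi(x/i)$, and replace each closed atomic subformula $P(a_1,\dots,a_n)$ (with $a_j\in M_m$) by a propositional variable $p_{P,(a_1,\dots,a_n)}$. Since $\mathcal{A}$ is a chain, the $\min$ and $\max$ occurring in the semantics of $\forall,\exists$ coincide with the lattice operations $\sqcap,\sqcup$ interpreting $\land,\vee$, so this finite unfolding preserves truth values.

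The key correspondence is then the following: specifying a finite $\mathcal{A}$-model on $M_m$ is the same as specifying a propositional $\mathcal{A}$-evaluation $e$ of the finitely many variables $p_{P,\bar a}$ occurring in $\hat\varphi_m$ (namely $e(p_{P,\bar a})=r_P(\bar a)$), and under this identification the value of $\varphi$ in $\mathbf{M}$ equals the value of $\hat\varphi_m$ under $e$. Hence $\Vert\varphi\Vert^\mathcal{A}_{\mathbf{M}}=1$ for every $\mathcal{A}$-model $\mathbf{M}$ with domain $M_m$ iff $\hat\varphi_m$ takes value $1$ under every $\mathcal{A}$-evaluation, i.e.\ iff $\hat\varphi_m\in$ TAUT$^\mathcal{A}$. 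Using completeness of L w.r.t.\ $\mathcal{A}$ (which yields TAUT$^\mathcal{A}=$ TAUT$_\text{L}$), this reads $\hat\varphi_m\in$ TAUT$_\text{L}$. Collecting the sizes,
\[
\varphi\in \text{fTAUT}^\mathcal{A}_\forall \iff \forall m\ge 1:\ \hat\varphi_m\in \text{TAUT}_\text{L}.
\]
Given $(\varphi,m)$, the formula $\hat\varphi_m$ is produced by a finite algorithm, and membership $\hat\varphi_m\in$ TAUT$_\text{L}$ is decidable by hypothesis; so the matrix is recursive and fTAUT$^\mathcal{A}_\forall$ is $\Pi_1$.

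The step I expect to be the crux is the correspondence in the third paragraph: the set of finite $\mathcal{A}$-models over $M_m$ may be uncountable when $A$ is infinite, yet this entire (possibly uncountable) universal quantification is captured by the single decidable question ``is $\hat\varphi_m$ an $\mathcal{A}$-tautology?''. This collapse is exactly what completeness w.r.t.\ the chain $\mathcal{A}$ together with decidability of TAUT$_\text{L}$ buys us, and carefully verifying that the finite-domain unfolding commutes with evaluation (in particular that $\min/\max$ become $\sqcap/\sqcup$ because $\mathcal{A}$ is totally ordered, and that only finitely many propositional variables appear) is where the genuine content lies.
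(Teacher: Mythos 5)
Your proposal is correct and follows essentially the same route as the paper: your $\hat\varphi_m$ is exactly the paper's translation $\varphi^{*,n}$ (Definition~\ref{def:tr}), your key correspondence between finite $\mathcal{A}$-models on $\{1,\dots,m\}$ and propositional $\mathcal{A}$-evaluations is Lemma~\ref{lem:tr}, the reduction to closed formulas is Lemma~\ref{clos}, and the final characterization $\varphi\in\text{fTAUT}^\mathcal{A}_\forall$ iff $(\forall m)(\hat\varphi_m\in\text{TAUT}_\text{L})$ with decidable matrix is precisely how the paper concludes that the set is $\Pi_1$.
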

The proof is an adaptation of the one given in \cite{trak}.

The key point is a technique of coding formulas of predicate logic by some formulas of propositional logic. 
\begin{definition}\label{def:tr}
Let $\mathcal{A}$ be an MTL-chain, and $\mathbf{M}=\lag M, \{r_{P_i}\}_{P_i\in\mathbf{P}}\rog$ be a finite $\mathcal{A}$-model, with $\vert M\vert=n$.

For each predicate $P_i$ of arity $s$ we introduce $n^s$ propositional variables $p_{ij_1\dots j_s}$, where $j_1\dots j_s\in\{1,\dots,n\}$ (assume $M = \{1,\dots,n\}$). Define an $\mathcal{A}$-evaluation $e_M$ of these propositional variables by setting $e_M(p_{ij_1,\dots,j_s}) = r_{P_i}(j_1,\dots, j_s)$ (i.e. the truth value of $p_{ij_1,\dots,j_s}$ is the degree in which $(j_1,\dots,j_s)$ is in the relation $r_{P_i}$).
We work with formulas of predicate logic with free variables substituted by elements
of $M$. For each such object $\varphi$ we define its translation $\varphi^{*,n}$ as follows:
$(P_i(j_1,\dots,j_s))^{*,n}=p_{ij_1\dots j_s}$; $(\varphi\odot\psi)^{*,n}=\varphi^{*,n}\odot\psi^{*,n}$, for $\odot\in\{\&,\to,\land\}$; $(\bot)^{*,n}=\bot$; $((\forall x)\varphi(x))^{*,n}=\bigwedge_{i=1}^n\varphi^{*,n}(i)$; $((\exists x)\varphi(x))^{*,n}=\bigvee_{i=1}^n\varphi^{*,n}(i)$.

Note that if $\varphi$ is as assumed (free variables replaced by elements of $M$) then $\Vert\varphi\Vert^\mathcal{A}_{\mathbf{M}}$ has the meaning $\Vert\varphi\Vert^\mathcal{A}_{\mathbf{M},v}$ where $v$ just assigns to each free variable the corresponding element of $M$ (and otherwise arbitrary).
\end{definition}
\begin{lemma}\label{lem:tr}
For each MTL-chain $\mathcal{A}$, finite $\mathcal{A}$-model $\mathbf{M}$ of cardinality $n$, and $\varphi$ as above,
\begin{equation*}
\Vert\varphi\Vert^\mathcal{A}_{\mathbf{M}}=e_M(\varphi^{*,n}).
\end{equation*}
\end{lemma}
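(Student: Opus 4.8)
The plan is to prove the identity $\Vert\varphi\Vert^\mathcal{A}_{\mathbf{M}}=e_M(\varphi^{*,n})$ by structural induction on the formula $\varphi$ (with free variables already replaced by elements of $M=\{1,\dots,n\}$). The translation $(\cdot)^{*,n}$ and the evaluation $e_M$ are both defined by recursion on the same term/connective structure, so the induction is expected to be entirely routine: at each step I compare how the left-hand side (a truth value computed in the $\mathcal{A}$-model) and the right-hand side (the $\mathcal{A}$-value of the propositional translation under $e_M$) are built, and check that they coincide because they are governed by the same algebraic operations of $\mathcal{A}$.

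First I would handle the \emph{atomic case}. For $\varphi=P_i(j_1,\dots,j_s)$ one has $\Vert P_i(j_1,\dots,j_s)\Vert^\mathcal{A}_{\mathbf{M}}=r_{P_i}(j_1,\dots,j_s)$ by the definition of truth value in a finite $\mathcal{A}$-model, while $\varphi^{*,n}=p_{ij_1\dots j_s}$ and $e_M(p_{ij_1\dots j_s})=r_{P_i}(j_1,\dots,j_s)$ by the very definition of $e_M$ in \Cref{def:tr}; the case $\varphi=\bot$ is immediate since both sides evaluate to $0$. Next I would treat the \emph{propositional connectives}. For $\odot\in\{\&,\to,\land\}$, using the inductive hypothesis on the immediate subformulas $\psi,\chi$ together with the clause $(\psi\odot\chi)^{*,n}=\psi^{*,n}\odot\chi^{*,n}$, the value $e_M((\psi\odot\chi)^{*,n})$ equals $e_M(\psi^{*,n})\circledast e_M(\chi^{*,n})$, where $\circledast$ is the corresponding algebraic operation ($*$, $\Rightarrow$, or $\sqcap$) of $\mathcal{A}$; by the inductive hypothesis this is $\Vert\psi\Vert^\mathcal{A}_{\mathbf{M}}\circledast\Vert\chi\Vert^\mathcal{A}_{\mathbf{M}}=\Vert\psi\odot\chi\Vert^\mathcal{A}_{\mathbf{M}}$, the last equality being the semantic clause for connectives. (The remaining connectives $\vee,\neg,\leftrightarrow$ are derived, so they need no separate treatment.)

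The \emph{quantifier cases} are where the finiteness of $M$ is used, and I expect these to be the only mildly delicate steps. For $\varphi=(\forall x)\psi(x)$ the semantics gives $\Vert(\forall x)\psi(x)\Vert^\mathcal{A}_{\mathbf{M}}=\min_{a\in M}\Vert\psi(a)\Vert^\mathcal{A}_{\mathbf{M}}$, the minimum being attained precisely because $M$ is finite and the model is witnessed. On the other side, $((\forall x)\psi(x))^{*,n}=\bigwedge_{i=1}^n\psi^{*,n}(i)$, and since $e_M$ commutes with the lattice meet $\sqcap$, one gets $e_M\bigl(\bigwedge_{i=1}^n\psi^{*,n}(i)\bigr)=\min_{1\leq i\leq n}e_M(\psi^{*,n}(i))$, which by the inductive hypothesis equals $\min_{a\in M}\Vert\psi(a)\Vert^\mathcal{A}_{\mathbf{M}}$. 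The dual argument with $\sqcup$, $\max$, and $\bigvee$ settles $(\exists x)\psi(x)$. The main (and really the only) conceptual point is that the finite conjunction/disjunction in the propositional translation exactly mirrors the $\min/\max$ over the finite domain $M$; once finiteness guarantees these extrema exist and equal the lattice operations, the induction closes.
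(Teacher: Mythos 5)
Your proposal is correct and takes essentially the same approach as the paper: structural induction on $\varphi$, with the atomic and $\bot$ cases settled by the definitions of $e_M$ and the translation, the propositional connectives by the fact that both sides are computed with the same algebraic operations of $\mathcal{A}$, and the quantifier cases by identifying the finite $\min$/$\max$ over the domain with the finite lattice meet/join in the translated formula. No gaps; the only cosmetic difference is that you spell out the role of finiteness and the derived connectives a bit more explicitly than the paper does.
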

\begin{proof}
By structural induction over $\varphi$.
\begin{itemize}
\item If $\varphi$ is atomic or $\bot$, then the result follows immediately from \Cref{def:tr}.
\item Suppose that $\varphi$ has the form $\psi\odot\chi$ and that the claim holds for $\psi, \chi$, with $\odot\in\{\&,\to,\land\}$: let us call $\cdot$ the algebraic interpretation of $\odot$. By \Cref{def:tr} we have that $\Vert\varphi\Vert^\mathcal{A}_{\mathbf{M}}=\Vert\psi\Vert^\mathcal{A}_{\mathbf{M}}\cdot \Vert\chi\Vert^\mathcal{A}_{\mathbf{M}}= e_M(\psi^{*,n})\cdot e_M(\chi^{*,n})=e_M(\varphi^{*,n})$.

\item Consider the case in which $\varphi$ has the form $(\forall x)\psi(x)$, and the claim holds for $\psi$. W.l.o.g. assume $M = \{1,\dots,n\}$. We have that $\Vert\varphi\Vert^\mathcal{A}_{\mathbf{M}}=\min_{i=1}^n \Vert\psi(i)\Vert^\mathcal{A}_{\mathbf{M}}=\min_{i=1}^n e_M(\psi^{*,n}(i))=e_M(\bigwedge_{i=1}^n\psi^{*,n}(i))=e_M(\varphi^{*,n})$.

The case in which $\varphi$ has the form $(\exists x)\psi$ is almost identical (it is enough to replace $\min$ with $\max$, $\forall$ with $\exists$, and $\bigwedge$ with $\bigvee$), and hence the proof is complete.
\end{itemize}
\end{proof}
\begin{lemma}\label{clos}
Let L be an axiomatic extension of MTL: for every non-closed first-order formula $\varphi$ denote with $\varphi^c$ it universal closure. Then, for every generic L-chain $\mathcal{A}$, and non-closed formula $\varphi$:
\begin{equation*}
\varphi\in \text{fTAUT}^\mathcal{A}_{\forall}\qquad\text{iff}\qquad\varphi^c\in \text{fTAUT}^\mathcal{A}_{\forall}.
\end{equation*}
\end{lemma}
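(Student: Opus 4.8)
The plan is to argue entirely semantically, unwinding the definition of $\text{fTAUT}^\mathcal{A}_\forall$ together with the clause for the universal quantifier. Write $x_1,\dots,x_k$ for the free variables of $\varphi$ (so $k\geq 1$, as $\varphi$ is non-closed), and $\varphi^c=(\forall x_1)\cdots(\forall x_k)\varphi$. First I would iterate the semantic clause $\Vert(\forall x)\psi\Vert^\mathcal{A}_{\mathbf{M},v}=\min\{\Vert\psi\Vert^\mathcal{A}_{\mathbf{M},v'}:v'\equiv_x v\}$ to obtain, for every finite $\mathcal{A}$-model $\mathbf{M}$ and every evaluation $v$,
\begin{equation*}
\Vert\varphi^c\Vert^\mathcal{A}_{\mathbf{M},v}=\min\{\Vert\varphi\Vert^\mathcal{A}_{\mathbf{M},v'}:v'(y)=v(y)\text{ for all }y\notin\{x_1,\dots,x_k\}\}.
\end{equation*}
This is a routine induction on $k$: composing the single-variable clauses, the inner $v'$ ranges exactly over the evaluations agreeing with $v$ off $\{x_1,\dots,x_k\}$. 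Since $\varphi^c$ is closed, its truth value does not depend on $v$ at all.

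For the left-to-right direction, suppose $\varphi\in\text{fTAUT}^\mathcal{A}_\forall$. Then $\Vert\varphi\Vert^\mathcal{A}_{\mathbf{M},v'}=1$ for every finite $\mathbf{M}$ and every $v'$, so each term in the minimum above equals $1$, whence $\Vert\varphi^c\Vert^\mathcal{A}_{\mathbf{M},v}=1$ for all $\mathbf{M}$ and $v$; thus $\varphi^c\in\text{fTAUT}^\mathcal{A}_\forall$.

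For the converse, suppose $\varphi^c\in\text{fTAUT}^\mathcal{A}_\forall$ and fix a finite $\mathcal{A}$-model $\mathbf{M}$ together with an evaluation $v$. Then the displayed minimum equals $\Vert\varphi^c\Vert^\mathcal{A}_{\mathbf{M},v}=1$. Here the only point requiring attention is that $1$ is the top element of the chain $\mathcal{A}$: since every term of the minimum is $\leq 1$ and their minimum is $1$, each term must equal $1$. In particular the term corresponding to $v'=v$ yields $\Vert\varphi\Vert^\mathcal{A}_{\mathbf{M},v}=1$. As $\mathbf{M}$ and $v$ were arbitrary, $\varphi\in\text{fTAUT}^\mathcal{A}_\forall$.

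There is no serious obstacle in this argument; the single load-bearing observation is that $1$ is the maximum of the chain, so that a minimum equal to $1$ forces every entry to equal $1$. I would also note that the proof uses neither the finiteness of $\mathbf{M}$ in an essential way nor the genericity of $\mathcal{A}$: it holds verbatim for any MTL-chain, and genericity is stated here only for uniformity with the surrounding results.
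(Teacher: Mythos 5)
Your proof is correct, and it matches the paper exactly in spirit: the paper's entire proof of this lemma is the phrase ``An easy check,'' and your argument is precisely that check written out --- unwinding the universal-quantifier clause and using that $1$ is the top of the chain, so a minimum equal to $1$ forces every term (in particular the one for $v'=v$) to equal $1$. Your closing observations that finiteness and genericity play no essential role are also accurate.
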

\begin{proof}
An easy check.
\end{proof}
We can now complete the proof of our first main result.	
\begin{proof}[Proof of \Cref{main}]
By \Cref{lem:tr} we have that, for every generic L-chain $\mathcal{A}$ and first-order closed (by \Cref{clos}, this can be done without loss of generality) formula $\varphi$: 
\begin{equation*}
\varphi\in \text{fTAUT}^\mathcal{A}_{\forall}\qquad\text{iff}\qquad(\forall n)(\varphi^{*,n}\in \text{TAUT}^\mathcal{A}).
\end{equation*}
Now, TAUT$^\mathcal{A}=$TAUT$_\text{L}$ is decidable, and every formula $\varphi^{*,n}$ can be computed in a finite time, since $\varphi$ contains only a finite number of predicates. Hence fTAUT$^\mathcal{A}_{\forall}$ is $\Pi_1$, and this concludes the proof.
\end{proof}
\section{Applications to many-valued logics}
In this section we analyze more in detail the arithmetical complexity of the fTAUT$_\forall$ problem, for a plethora of first-order many-valued logics, by showing that it is $\Pi_1$-complete. Moreover, we show that for every axiomatic extension L of MTL s.t. TAUT$_\text{L}$ is decidable, the fSCC fails to hold for L$\forall$.
\begin{theorem}\label{inc}
Let L be an axiomatic extension of MTL s.t. TAUT$_L$ is decidable, and whose corresponding variety is generated by an L-chain. Then, for every generic L-chain $\mathcal{A}$, if:
\begin{itemize}
\item L is a extension of BL or
\item L is an extension of WNM or
\item L is an extension of SMTL,
\end{itemize}
then fTAUT$^\mathcal{A}_{\forall}$ is $\Pi_1$-complete. 

In particular, if $\text{L}\in\{\text{DP},\text{G},\Pi\}$, then $fTAUT^\mathcal{A}_{\forall}$ is $\Pi_1$-complete for every infinite L-chain $\mathcal{A}$. 
\end{theorem}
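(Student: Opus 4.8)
The plan is to combine the upper bound already available from \Cref{main} with a matching $\Pi_1$-hardness, obtained by a computable many-one reduction from the classical finite-tautology problem fTAUT$^{\mathbf 2}_\forall$, which is $\Pi_1$-complete by \Cref{trak}. Since $\mathcal A$ is generic we have that L is complete w.r.t.\ $\mathcal A$, so TAUT$^{\mathcal A}=$ TAUT$_\text{L}$ is decidable; as the variety is generated by a chain, \Cref{main} yields fTAUT$^{\mathcal A}_\forall\in\Pi_1$, and it remains to exhibit, for each first-order formula $\varphi$ in the language of $\mathbf 2$, a formula $\varphi^\bullet$ computable from $\varphi$ with
\[
\varphi\in\text{fTAUT}^{\mathbf 2}_\forall \qquad\text{iff}\qquad \varphi^\bullet\in\text{fTAUT}^{\mathcal A}_\forall .
\]
By \Cref{clos} we may restrict to closed $\varphi$. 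The backbone of the reduction is that $\{0,1\}$ is always a subalgebra of $\mathcal A$ isomorphic to $\mathbf 2$, closed under all the connectives and under the finite $\min$/$\max$ used for the quantifiers; hence any finite $\mathcal A$-model whose predicates take values in $\{0,1\}$ evaluates every formula exactly as the corresponding Boolean model, by the same structural induction as in \Cref{lem:tr}.

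Granting this, the ``countermodel'' direction is immediate: a finite Boolean model falsifying $\varphi$ is, read inside $\{0,1\}\subseteq A$, a finite $\mathcal A$-model, so if $\varphi^\bullet$ agrees with $\varphi$ on crisp models then $\|\varphi^\bullet\|^{\mathcal A}<1$ there. The whole difficulty lies in the other direction, namely turning an \emph{arbitrary} (fuzzy) finite $\mathcal A$-model into a faithful Boolean one, i.e.\ \emph{crispifying} the predicate values. I would isolate this step as a per-family lemma.

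For the SMTL case this is clean. In every SMTL-chain the negation is the pseudocomplement, so $\sim x=0$ for $x>0$, whence $\neg\neg x\in\{0,1\}$ with $\neg\neg 0=0$ and $\neg\neg 1=1$. Taking $\varphi^\bullet$ to be $\varphi$ with every atom $P(\bar x)$ replaced by $\neg\neg P(\bar x)$ forces all atomic values into $\{0,1\}$, so that $\|\varphi^\bullet\|^{\mathcal A}_{\mathbf N}=\|\varphi\|^{\mathbf 2}_{\mathbf M}$ for the Boolean model $\mathbf M$ obtained by applying $\neg\neg$ to the interpretations of $\mathbf N$; both directions then follow from the boxed equivalence. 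The same crispifier works verbatim for every infinite DP-chain, since there the residuation computation again gives $\neg\neg x=1$ iff $x=1$. As every infinite chain of G, $\Pi$ (both SMTL) and of DP is generic, this settles the ``in particular'' clause for all infinite chains at once.

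For extensions of BL and of WNM a projection term into $\{0,1\}$ need not exist: in the Łukasiewicz chains the term functions are continuous, and in the NM chains the idempotent upper part $A^+$ is left rigid, so no definable operation collapses $A$ onto $\{0,1\}$. Here I would replace crispification of values by a \emph{guard}, setting
\[
\varphi^\bullet \;\df\; \mathrm{Cr}_\varphi \to \varphi, \qquad
\mathrm{Cr}_\varphi \;\df\; \bigwedge_{P\text{ in }\varphi}(\forall \bar x)\,\bigl(P(\bar x)\vee\neg P(\bar x)\bigr),
\]
so that $\|\mathrm{Cr}_\varphi\|^{\mathcal A}_{\mathbf N}=1$ precisely when every predicate of $\mathbf N$ is $\{0,1\}$-valued; on crisp models $\varphi^\bullet$ reduces to $\varphi$, giving the countermodel direction as before. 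The main obstacle is the converse, which I expect to rest on a \emph{graded soundness} lemma: for every classical finite tautology $\varphi$ and every finite $\mathcal A$-model one must show
\[
\|\varphi\|^{\mathcal A}_{\mathbf N}\;\ge\;\|\mathrm{Cr}_\varphi\|^{\mathcal A}_{\mathbf N},
\]
i.e.\ that the validity defect of a classical tautology is bounded by the crispness defect of the model, whence $\|\varphi^\bullet\|^{\mathcal A}_{\mathbf N}=1$. This inequality is where the structure of each family must enter: the divisibility law and the ordinal-sum description of BL-chains on one side, and the explicit form \eqref{eq:op} of $*,\Rightarrow$ together with the idempotency of $A^+$ in WNM-chains on the other. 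Should the excluded-middle guard prove too weak for some family, the fallback is to strengthen $\mathrm{Cr}_\varphi$ (for instance to the idempotency guard $\bigwedge_P(\forall\bar x)(P(\bar x)\leftrightarrow P(\bar x)\,\&\,P(\bar x))$, which in a BL-chain pins atomic values to the idempotent set), while checking that on the \emph{generic} chain $\mathcal A$ the strengthened guard still forces genuine $\{0,1\}$-crispness and still admits the graded estimate.
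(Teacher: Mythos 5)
Your upper bound via \Cref{main} and your treatment of SMTL are correct and coincide with what the paper does (the paper also crispifies atoms by double negation in the SMTL case). The rest, however, has two genuine problems. First, your claim that the same crispifier ``works verbatim for every infinite DP-chain'' is false. In a DP-chain one only knows $x\leq\ \sim x$ for $x<1$; the double negation need not land in $\{0,1\}$. Concretely, take the WNM-chain with support $[0,\frac12]\cup\{1\}$ and weak negation $\sim 0=1$, $\sim x=\frac12$ for $x\in(0,\frac12]$, $\sim 1=0$: this is an infinite DP-chain (hence generic, by the result of \cite{abv} invoked in the paper), and in it $\sim\sim x=\frac12\notin\{0,1\}$ for every $x\in(0,\frac12]$. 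Your reduction then maps the classical finite tautology $P(x)\vee\neg P(x)$ to $\neg\neg P(x)\vee\neg\neg\neg P(x)$, which takes value $\frac12<1$ on the model with $r_P\equiv\frac14$, so the displayed equivalence with fTAUT$^{\mathbf 2}_\forall$ breaks. The paper instead obtains DP as an instance of the WNM case.

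Second, and more seriously, for the two families that carry the real weight of the theorem --- extensions of BL and of WNM --- you do not give a proof: the ``graded soundness'' inequality $\Vert\varphi\Vert^{\mathcal A}_{\mathbf N}\geq\Vert\mathrm{Cr}_\varphi\Vert^{\mathcal A}_{\mathbf N}$ is only conjectured, and the fallback guard is likewise speculation. This is exactly the step on which the paper spends all of its machinery, and it solves it by different means: for BL it uses the ordinal-sum decomposition of \cite[Theorem 3.7]{am} and the double-negation translation $t$ to project onto the first MV-component, and then the MV-specific reduction of \Cref{lem:luk} (building on \Cref{pred} and \Cref{lem:luk1}), whose guard $\text{PREDEF}_\varphi$ excludes the negation \emph{fixpoint} and whose consequent is the weakened $\neg\varphi\to\varphi$ rather than $\varphi$ itself; for WNM it uses the squaring translation of \Cref{def:trg} together with \Cref{lem:gc} and \Cref{lem:gc1} to reduce to the G\"odel chain $\mathcal{A}_G$, and then reuses the BL case. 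It is worth noting that your inequality is in fact true, and provably so \emph{uniformly} for all MTL-chains rather than through family-specific structure as you suggest: over any MTL-chain one may push negations to the atoms using the chain-valid laws $\sim(x\sqcap y)=\,\sim x\,\sqcup\sim y$, $\sim(x\sqcup y)=\,\sim x\,\sqcap\sim y$ and $\sim\sim\sim x=\,\sim x$, distribute to a conjunctive normal form (valid in any chain), and observe that every clause of a classically tautological CNF contains a complementary pair of literals, hence has value at least $\max(x,\sim x)\geq\Vert\mathrm{Cr}_\varphi\Vert$; carried out, this would even strengthen \Cref{inc} to every generic chain of every L with decidable TAUT$_\text{L}$, which bears on the paper's open problem on IMTL. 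But since your proposal neither identifies nor executes this argument --- indeed it guesses that divisibility, ordinal sums, or the form of WNM operations must enter --- the hardness proof for the BL and WNM cases is missing as written.
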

To prove the theorem, we first need to develop some machinery for the case of the extensions of WNM, in the way to recursively reduce the problem to the G\"odel-case.  

We start by adapting a translation of formulas firstly presented in \cite{nmfirst} for NM$\forall$. 
\begin{definition}\label{def:trg}
Let $\varphi$ be a first-order formula. We define $\varphi^*$, inductively, as follows:
\begin{itemize}
\item If $\varphi$ is atomic, then $\varphi^*\df\varphi^2$.
\item If $\varphi$ is $\bot$, then $\varphi^*\df\bot$.
\item If $\varphi$ is $\psi\land\chi$, then $\varphi^*\df\psi^*\land\chi^*$.
\item If $\varphi$ is $\psi\&\chi$, then $\varphi^*\df\psi^*\&\chi^*$.
\item If $\varphi$ is $\psi\rightarrow\chi$, then $\varphi^*\df(\psi^*\rightarrow\chi^*)^2$.
\item If $\varphi$ is $(\forall x)\chi$, then $\varphi^*\df(\forall x)\chi^*$.
\item If $\varphi$ is $(\exists x)\chi$, then $\varphi^*\df(\exists x)\chi^*$.
\end{itemize}
\end{definition}
\begin{definition}\label{def:cut}
Let $\mathcal{A}$ be a WNM-chain, and $\mathbf{M}=\lag M,\{r_P\}_{P\in\mathbf{P}}\rog$ be a finite $\mathcal{A}$-model. We construct the $\mathcal{A}$-model  $\mathbf{M^+}=\lag M,\{r'_P\}_{P\in\mathbf{P}}\rog$ as follows: for every predicate $P$ of arity $n$, and $m_1,\dots,m_n\in M$,
\begin{equation*}
r'_P(m_1,\dots,m_n)\df\begin{cases}
r_P(m_1,\dots,m_n)&\text{if }r_P(m_1,\dots,m_n)\in A^+,\\
0&\text{otherwise.}
\end{cases}
\end{equation*}
\end{definition}
The idea is that $\mathbf{M}^+$ restricts the assignments of $\mathbf{M}$ (to the atomic formulas) to the idempotent elements of $\mathcal{A}$.

\medskip
\noindent Observe now that:
\begin{lemma}\label{lemg}
For every WNM-chain $\mathcal{A}$, and every $x,y\in A^+$:
\begin{equation*}
x*y=\min\{x,y\}\qquad x\Rightarrow y=\begin{cases}
1&\text{if }x\leq y\\
y&\text{otherwise.}
\end{cases}
\end{equation*}
That is, $*,\Rightarrow$ are the operations of a G\"odel hoop (see \cite{eghm} for details).
\end{lemma}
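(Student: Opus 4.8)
The plan is to read both identities directly off the explicit description of $*$ and $\Rightarrow$ on a WNM-chain recorded in \eqref{eq:op}, using two auxiliary facts: that in any MTL-chain the negation $\sim$ is order-reversing (immediate, since $\Rightarrow$ is antitone in its first argument), and that $x\in A^+$ means precisely $\sim x<x$. So the whole argument reduces to verifying, in each branch of \eqref{eq:op}, a single comparison between a negated element and an element of $A^+$.

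For the product, I would fix $x,y\in A^+$. By \eqref{eq:op} we have $x*y=\min\{x,y\}$ unless $x\leq\sim y$, so the only task is to rule out $x\leq\sim y$, i.e.\ to show $\sim y<x$. Since $\mathcal{A}$ is a chain, I split on the order of $x$ and $y$. If $x\leq y$, antitonicity of $\sim$ gives $\sim y\leq\sim x$, and $x\in A^+$ gives $\sim x<x$, whence $\sim y<x$. If instead $y<x$, then $\sim y<y<x$ using $y\in A^+$. In both cases $\sim y<x$, so the second branch of \eqref{eq:op} applies and $x*y=\min\{x,y\}$. Commutativity of $*$ makes the choice of which variable to bound irrelevant.

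For the implication, the case $x\leq y$ yields $x\Rightarrow y=1$ immediately from \eqref{eq:op}. When $x\not\leq y$, i.e.\ $y<x$, \eqref{eq:op} gives $x\Rightarrow y=\max\{\sim x,y\}$, so it suffices to check $\sim x\leq y$: from $y<x$ and antitonicity I get $\sim x\leq\sim y$, and $y\in A^+$ gives $\sim y<y$, hence $\sim x<y$ and $\max\{\sim x,y\}=y$. Comparing the resulting clauses with the defining operations of a G\"odel hoop then gives the final sentence of the statement.

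There is no genuine obstacle here; the only point requiring care is applying both the defining inequality of $A^+$ and the antitonicity of $\sim$ in the correct direction within each case split. The idempotency of elements of $A^+$ noted after \eqref{eq:op} is consistent with the claim ($x*x=\min\{x,x\}=x$) but is not needed for the proof.
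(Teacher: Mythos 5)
Your proof is correct and matches the paper's approach: the paper's entire proof is ``An easy check from \Cref{eq:op},'' and your argument is exactly that check spelled out, using antitonicity of $\sim$ and the defining inequality $\sim x < x$ of $A^+$ to resolve each branch of \eqref{eq:op}. Both case splits are handled in the right direction, so nothing is missing.
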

\begin{proof}
An easy check from \Cref{eq:op}.
\end{proof}
The next two lemmas show the connection between G\"odel chains and WNM-chains with restricted models.
\begin{lemma}\label{lem:gc}
Let $\mathcal{A}$ be a WNM-chain, and $\varphi$ be a first-order formula. Then for every finite $\mathcal{A}$-model $\mathbf{M}$, and evaluation $v$ it holds that:
\begin{equation*}
\Vert \varphi^*\Vert^\mathcal{A}_{\mathbf{M},v}=\Vert \varphi^*\Vert_{\mathbf{M}^+,v}^\mathcal{A}.
\end{equation*}
Moreover, $\Vert \varphi^*\Vert_{\mathbf{M}^+,v}^\mathcal{A}\in A^+\cup\{0\}$.
\end{lemma}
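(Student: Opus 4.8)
The plan is to establish both assertions together by a single structural induction on $\varphi$, since the membership claim $\Vert\varphi^*\Vert^\mathcal{A}_{\mathbf{M}^+,v}\in A^+\cup\{0\}$ for the subformulas is precisely what licenses the use of \Cref{lemg} in the inductive steps, while the equality of truth values in $\mathbf{M}$ and $\mathbf{M}^+$ then propagates automatically through each connective. I will freely use that in a chain $\sqcap$ and $\sqcup$ are $\min$ and $\max$; that $A^+$ is an up-set (if $x\in A^+$ and $y\geq x$ then $\sim y\leq\sim x<x\leq y$, so $y\in A^+$); and hence that $A^+\cup\{0\}$ is closed under the $\min$ and $\max$ of finitely many elements. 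Finiteness of $M$ guarantees the quantifier ranges are finite, so these closure properties suffice.

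For the base cases, take $\varphi=P(\dots)$ atomic, so $\varphi^*=\varphi^2$ and $\Vert\varphi^*\Vert^\mathcal{A}_{\mathbf{M},v}=r_P(\dots)*r_P(\dots)$. By \Cref{eq:op} the square $x*x$ equals $x$ when $x\in A^+$ and $0$ otherwise, which is exactly the truncation defining $r'_P$ in \Cref{def:cut}; thus $\Vert\varphi^*\Vert^\mathcal{A}_{\mathbf{M},v}=r'_P(\dots)\in A^+\cup\{0\}$. Evaluating $\varphi^*$ in $\mathbf{M}^+$ then re-squares an element of $A^+\cup\{0\}$, which, being idempotent or $0$, returns the same value. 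The case $\varphi=\bot$ is trivial.

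The inductive steps for $\land$, $(\forall x)$ and $(\exists x)$ are immediate from the closure of $A^+\cup\{0\}$ under $\min$ and $\max$: by induction the subformula values coincide in the two models and lie in $A^+\cup\{0\}$, so the same holds for $\varphi^*$. For $\varphi=\psi\&\chi$, writing $a,b\in A^+\cup\{0\}$ for the common values of $\psi^*,\chi^*$, \Cref{lemg} gives $a*b=\min\{a,b\}\in A^+$ when $a,b\in A^+$, while $a*b=0$ as soon as a factor is $0$; in either case the value is in $A^+\cup\{0\}$ and model-independent.

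The delicate case, and the reason \Cref{def:trg} squares implications, is $\varphi=\psi\to\chi$, where $\varphi^*=(\psi^*\to\chi^*)^2$. With $a,b\in A^+\cup\{0\}$ the common values of $\psi^*,\chi^*$, I would split on the value of $a\Rightarrow b$. If $a=0$ then $a\Rightarrow b=1\in A^+$; if $a,b\in A^+$ then \Cref{lemg} makes $a\Rightarrow b$ equal to $1$ or to $b$, both in $A^+$ and hence idempotent. The only way to leave $A^+$ is $a\in A^+$, $b=0$, giving $a\Rightarrow b=\sim a$: from $a>\sim a$ and antitonicity of $\sim$ one gets $\sim a\leq\sim\sim a$, so $\sim a\notin A^+$ and \Cref{eq:op} forces $(\sim a)^2=0$. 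Thus the outer square collapses this value to $0$, keeping $\Vert\varphi^*\Vert\in A^+\cup\{0\}$, and model-independence again follows from the induction hypothesis. I expect this implication case to be the crux: it is exactly where the squaring is indispensable and where the interplay between $\sim$ and $A^+$ must be pinned down.
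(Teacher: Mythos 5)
Your proof is correct and takes essentially the same route as the paper's: a single structural induction on $\varphi$ in which the squaring in the translation together with \Cref{def:cut} settles the atomic case, \Cref{lemg} handles $\&$ and the benign implication cases, and the crux is the same observation that for $a\in A^+$ one has $\sim a\notin A^+$, so the outer square collapses $a\Rightarrow 0$ to $0$. The only differences are cosmetic: you organize the implication case by membership of the values in $A^+\cup\{0\}$ rather than by comparing them as the paper does, and you spell out facts the paper leaves implicit (idempotency on $A^+$, closure of $A^+\cup\{0\}$ under finite $\min$/$\max$).
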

\begin{proof}
By structural induction over $\varphi$.
\begin{itemize}
\item If $\varphi$ is atomic or $\bot$, then the result follows immediately from \Cref{def:cut} and \Cref{def:trg}.
\item Suppose that $\varphi$ has the form $\psi\land\chi$ and that the claim holds for $\psi, \chi$. 

By the induction hypothesis we have that $\Vert \varphi^*\Vert^\mathcal{A}_{\mathbf{M},v}=\min\{\Vert \psi^*\Vert^\mathcal{A}_{\mathbf{M},v},\Vert \chi^*\Vert^\mathcal{A}_{\mathbf{M},v}\}=\min\{\Vert \psi^*\Vert^\mathcal{A}_{\mathbf{M}^+,v},\Vert \chi^*\Vert^\mathcal{A}_{\mathbf{M}^+,v}\}=\Vert \varphi^*\Vert^\mathcal{A}_{\mathbf{M}^+,v}$. Since $\Vert \psi^*\Vert^\mathcal{A}_{\mathbf{M}^+,v},\Vert \chi^*\Vert^\mathcal{A}_{\mathbf{M}^+,v}\in A^+\cup\{0\}$, then also $\Vert \varphi^*\Vert^\mathcal{A}_{\mathbf{M}^+,v}\in A^+\cup\{0\}$.
\item Suppose that $\varphi$ has the form $\psi\&\chi$, and that the claim holds for $\psi, \chi$. 

By the induction hypothesis we have that $\Vert \psi^*\Vert^\mathcal{A}_{\mathbf{M}^+,v},\Vert \chi^*\Vert^\mathcal{A}_{\mathbf{M}^+,v}\in A^+\cup\{0\}$, and $\Vert \psi^*\Vert^\mathcal{A}_{\mathbf{M},v}=\Vert \psi^*\Vert^\mathcal{A}_{\mathbf{M}^+,v},\Vert \chi^*\Vert^\mathcal{A}_{\mathbf{M},v}=\Vert \chi^*\Vert^\mathcal{A}_{\mathbf{M}^+,v}$. From this fact, and \Cref{lemg} we have that $\Vert \varphi^*\Vert^\mathcal{A}_{\mathbf{M},v}=\min\{\Vert \psi^*\Vert^\mathcal{A}_{\mathbf{M},v},\Vert \chi^*\Vert^\mathcal{A}_{\mathbf{M},v}\}$, and the proof is identical to the previous case.
\item Suppose that $\varphi$ has the form $\psi\to\chi$, and that the claim holds for $\psi, \chi$. 

By the induction hypothesis we have that $\Vert \psi^*\Vert^\mathcal{A}_{\mathbf{M}^+,v},\Vert \chi^*\Vert^\mathcal{A}_{\mathbf{M}^+,v}\in A^+\cup\{0\}$, and $\Vert \psi^*\Vert^\mathcal{A}_{\mathbf{M},v}=\Vert \psi^*\Vert^\mathcal{A}_{\mathbf{M}^+,v},\Vert \chi^*\Vert^\mathcal{A}_{\mathbf{M},v}=\Vert \chi^*\Vert^\mathcal{A}_{\mathbf{M}^+,v}$. If $\Vert \psi^*\Vert^\mathcal{A}_{\mathbf{M},v}\leq\Vert \chi^*\Vert^\mathcal{A}_{\mathbf{M},v}$, then $\Vert \varphi^*\Vert^\mathcal{A}_{\mathbf{M},v}=\Vert \varphi^*\Vert^\mathcal{A}_{\mathbf{M}^+,v}=1$. Assume now that $\Vert \psi^*\Vert^\mathcal{A}_{\mathbf{M},v}>\Vert \chi^*\Vert^\mathcal{A}_{\mathbf{M},v}$: if $\Vert \chi^*\Vert^\mathcal{A}_{\mathbf{M},v}=0$, then $\Vert \varphi^*\Vert^\mathcal{A}_{\mathbf{M},v}=(\Vert \psi^*\Vert^\mathcal{A}_{\mathbf{M},v}\Rightarrow 0)^2=(\Vert \psi^*\Vert^\mathcal{A}_{\mathbf{M^+},v}\Rightarrow 0)^2=\Vert \varphi^*\Vert^\mathcal{A}_{\mathbf{M^+},v}$. Since $\mathcal{A}$ is a WNM-chain, and  $\Vert \psi^*\Vert^\mathcal{A}_{\mathbf{M},v}\in A^+$, then $\Vert \psi^*\Vert^\mathcal{A}_{\mathbf{M},v}\Rightarrow 0\in A\setminus A^+$, and hence $(\Vert \psi^*\Vert^\mathcal{A}_{\mathbf{M},v}\Rightarrow 0)^2=0$. It follows that $\Vert \varphi^*\Vert^\mathcal{A}_{\mathbf{M},v}=\Vert \varphi^*\Vert^\mathcal{A}_{\mathbf{M^+},v}=0$.
Finally, suppose that $\Vert \psi^*\Vert^\mathcal{A}_{\mathbf{M},v}>\Vert \chi^*\Vert^\mathcal{A}_{\mathbf{M},v}>0$: it follows that $\Vert \psi^*\Vert^\mathcal{A}_{\mathbf{M},v}, \Vert \chi^*\Vert^\mathcal{A}_{\mathbf{M},v}\in A^+$, and by \Cref{lemg} $\Vert \varphi^*\Vert^\mathcal{A}_{\mathbf{M},v}=(\Vert \psi^*\Vert^\mathcal{A}_{\mathbf{M},v}\Rightarrow \Vert \chi^*\Vert^\mathcal{A}_{\mathbf{M},v})^2=\Vert \chi^*\Vert^\mathcal{A}_{\mathbf{M},v}=\Vert \chi^*\Vert^\mathcal{A}_{\mathbf{M^+},v}=(\Vert \psi^*\Vert^\mathcal{A}_{\mathbf{M^+},v}\Rightarrow \Vert \chi^*\Vert^\mathcal{A}_{\mathbf{M^+},v})^2=\Vert \varphi^*\Vert^\mathcal{A}_{\mathbf{M^+},v}$.
\item Consider the case in which $\varphi$ has the form $(\forall x)\psi$, and the claim holds for $\psi$. Since we are working on a finite model, then there is a particular evaluation $w$ such that $\Vert \psi^*\Vert^\mathcal{A}_{\mathbf{M},w}=\min_{u\equiv_x v}\{\Vert \psi^*\Vert^\mathcal{A}_{\mathbf{M},u}\}=\Vert (\forall x)\psi^*\Vert^\mathcal{A}_{\mathbf{M},v}$. Since, by the induction hypothesis, $\Vert \psi^*\Vert^\mathcal{A}_{\mathbf{M},t}=\Vert \psi^*\Vert^\mathcal{A}_{\mathbf{M^+},t}$ for every $t$ (and each of them belongs to $A^+\cup\{0\}$), it follows that $\Vert (\forall x)\psi^*\Vert^\mathcal{A}_{\mathbf{M},v}=\Vert (\forall x)\psi^*\Vert^\mathcal{A}_{\mathbf{M^+},v}$. Clearly, $\Vert (\forall x)\psi^*\Vert^\mathcal{A}_{\mathbf{M^+},v}\in A^+\cup\{0\}$. 

The case in which $\varphi$ has the form $(\exists x)\psi$ is almost identical (it is enough to replace $\min$ with $\max$, and $\forall$ with $\exists$), and hence the proof is complete.
\end{itemize}
\end{proof}

\begin{lemma}\label{lem:gc1}
Let $\mathcal{A}$ be a WNM-chain, and $\varphi$ be a first-order formula. Then for every finite $\mathcal{A}$-model $\mathbf{M}$, and evaluation $v$ it holds that:
\begin{equation*}
\Vert \varphi^*\Vert^\mathcal{A}_{\mathbf{M^+},v}=\Vert \varphi^*\Vert^{\mathcal{A}_G}_{\mathbf{M}',v}=\Vert \varphi\Vert^{\mathcal{A}_G}_{\mathbf{M}',v}.
\end{equation*}
Where $\mathcal{A}_G$ is the G\"odel chain with support $A^+\cup\{0\}$, and $\mathbf{M}'$ is identical to $\mathbf{M}^+$ with the only difference that the codomain of the various $r_P$'s is $A^+\cup\{0\}$.
\end{lemma}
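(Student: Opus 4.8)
The plan is to establish both equalities by structural induction on $\varphi$, treating them as two essentially independent statements chained together. The first equality compares the evaluation of $\varphi^*$ in the WNM-chain $\mathcal{A}$ (over $\mathbf{M}^+$) against its evaluation in the G\"odel chain $\mathcal{A}_G$ (over $\mathbf{M}'$), and it rests on \Cref{lem:gc} and \Cref{lemg}; the second equality compares $\varphi^*$ with $\varphi$ inside $\mathcal{A}_G$ alone, and rests only on the idempotency of G\"odel conjunction.

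For the first equality the base cases (atomic formulas and $\bot$) are immediate, since $\mathbf{M}'$ carries the very same assignments as $\mathbf{M}^+$ (only the codomain is formally recast as $A^+\cup\{0\}$) and squaring an element of $A^+\cup\{0\}$ is the identity in either algebra. For $\psi\land\chi$ and $\psi\&\chi$ the inductive step is routine: by \Cref{lem:gc} the values $\Vert\psi^*\Vert^\mathcal{A}_{\mathbf{M}^+,v}$ and $\Vert\chi^*\Vert^\mathcal{A}_{\mathbf{M}^+,v}$ lie in $A^+\cup\{0\}$, and on that set both the WNM monoidal product (by \Cref{lemg}, together with the trivial case involving $0$) and the lattice meet coincide with the G\"odel product $\min$, so I simply apply the induction hypothesis. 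The quantifier cases reduce, by finiteness of $\mathbf{M}$, to a $\min$ (resp. $\max$) of finitely many values all lying in $A^+\cup\{0\}$, computed identically in $\mathcal{A}$ and in $\mathcal{A}_G$.

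The delicate case, and the one I expect to be the main obstacle, is $\psi\to\chi$, where the translation inserts a square: $(\psi\to\chi)^*=(\psi^*\to\chi^*)^2$. Writing $p=\Vert\psi^*\Vert^\mathcal{A}_{\mathbf{M}^+,v}$ and $q=\Vert\chi^*\Vert^\mathcal{A}_{\mathbf{M}^+,v}$, both in $A^+\cup\{0\}$ by \Cref{lem:gc}, I would check that the squared WNM implication $(p\Rightarrow q)^2$ equals the G\"odel implication $p\Rightarrow_G q$ via the three cases $p\le q$, $p>q=0$, and $p>q>0$. When $p\le q$ both sides give $1$; when $p>q>0$ both values lie in $A^+$, \Cref{lemg} gives $p\Rightarrow q=q$, and idempotency of $q\in A^+$ leaves $q^2=q=p\Rightarrow_G q$; the only genuinely subtle case is $p>q=0$, where the WNM value $p\Rightarrow 0=\sim p$ lands in $A\setminus A^+$, so that squaring collapses it to $0$ (exactly as in the implication case of the proof of \Cref{lem:gc}), matching $p\Rightarrow_G 0=0$. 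This is the heart of why the translation $(\cdot)^*$ forces every subformula value into the G\"odel skeleton $A^+\cup\{0\}$.

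The second equality, $\Vert\varphi^*\Vert^{\mathcal{A}_G}_{\mathbf{M}',v}=\Vert\varphi\Vert^{\mathcal{A}_G}_{\mathbf{M}',v}$, is then a short structural induction exploiting that in any G\"odel chain $x*x=x$. Consequently every square introduced by \Cref{def:trg} is vacuous: for atomic $\varphi$ one has $\Vert\varphi^2\Vert^{\mathcal{A}_G}_{\mathbf{M}',v}=\Vert\varphi\Vert^{\mathcal{A}_G}_{\mathbf{M}',v}$; for $(\psi\to\chi)^*=(\psi^*\to\chi^*)^2$ the outer square disappears and the induction hypothesis on $\psi,\chi$ closes the step; the remaining connectives and the quantifiers introduce no squares and follow directly from the induction hypothesis. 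Composing the two equalities yields the statement.
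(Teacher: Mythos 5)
Your proof is correct and follows essentially the same route as the paper: the first equality by structural induction mirroring the case analysis of \Cref{lem:gc} (with \Cref{lemg} handling the operations on $A^+\cup\{0\}$, and the squared implication collapsing $\sim p$ to $0$ in the critical case), and the second equality from the idempotency $x=x^2$ in G\"odel chains, which makes every square introduced by \Cref{def:trg} vacuous. The paper merely leaves implicit the details you spell out, so no further changes are needed.
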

\begin{proof}
The first equality can be shown by structural induction over $\varphi$, by inspecting the proof of \Cref{lem:gc}, and considering \Cref{lemg}. 

The second equality of the theorem is immediate from \Cref{def:trg}, and the fact that the equation $x=x^2$ holds in every G\"odel chain.
\end{proof}
We also need the following results, concerning \L ukasiewicz logics: they are adaptations of \cite[Lemma 4.15, Theorem 4.16]{lcomp}.
\begin{definition}
For every atomic formula $P(\vv{x})$, define PREDEF$_P\df (\forall \vv{x})\neg(P(\vv{x})\leftrightarrow \neg P(\vv{x}))$. For every formula $\varphi$, with PREDEF$_\varphi$ we denote the $\land$ conjunction of PREDEF$_P$, for every atomic formula $P(\vv{x})$ in $\varphi$.

\noindent Let us call \emph{classical} a formula containing only $\land,\vee,\neg$ as connectives, and $\forall$ as quantifier.
\end{definition}
\begin{lemma}\label{pred}
\begin{enumerate}
\item For every classical formula $\varphi$, every MV-chain $\mathcal{A}$, and finite $\mathcal{A}$-model $\mathbf{M}$, if $\Vert\text{PREDEF}_\varphi\Vert_{\mathbf{M},w}^\mathcal{A}>0$, for some evaluation $w$, then $\Vert\text{PREDEF}_\varphi\Vert_{\mathbf{M},v}^\mathcal{A}>0$ for every other evaluation $v$.

\item Moreover, if $\Vert\text{PREDEF}_\varphi\Vert_{\mathbf{M},v}^\mathcal{A}>0$ for some evaluation $v$, then $\Vert\psi\Vert_{\mathbf{M},v}^\mathcal{A}$ is not a negation fixpoint, for every subformula $\psi$ of $\varphi$.
\end{enumerate}
\end{lemma}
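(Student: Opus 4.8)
The plan is to prove both parts by exploiting the special structure of the negation in MV-chains together with the definition of $\text{PREDEF}_\varphi$. First I would unpack the definition: $\text{PREDEF}_P = (\forall \vv{x})\neg(P(\vv{x})\leftrightarrow\neg P(\vv{x}))$, so on a finite model $\mathbf{M}$ the truth value $\Vert\text{PREDEF}_P\Vert^\mathcal{A}_{\mathbf{M},v}$ is a minimum over all assignments to the arguments $\vv{x}$ of the quantity $\sim(r_P(\vv{a})\Leftrightarrow\sim r_P(\vv{a}))$. Since $\text{PREDEF}_\varphi$ is the $\land$-conjunction over the finitely many predicates occurring in $\varphi$, its value is the $\min$ of these, and crucially it depends only on the interpretations $r_P$ of the model, not on the free-variable evaluation at all (the free variables of $\varphi$ are bound by the $(\forall\vv{x})$ in each conjunct). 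This immediately gives part (1): the value of $\text{PREDEF}_\varphi$ is the same under every evaluation $v$, so if it is positive for one it is positive for all.

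For part (2) the key observation is what positivity of $\text{PREDEF}_\varphi$ means pointwise. In the standard MV-chain $[0,1]_\text{\L}$ we have $\sim x = 1-x$, so $x\Leftrightarrow\sim x$ equals $1-|1-2x|$, which is $1$ exactly at the fixpoint $x=\tfrac12$; hence $\sim(x\Leftrightarrow\sim x)>0$ iff $x\neq\tfrac12$, i.e. iff $x$ is not the negation fixpoint. By the standard-completeness and the fact that every MV$_n$-chain embeds into $[0,1]_\text{\L}$, the same computation shows in an arbitrary MV-chain $\mathcal{A}$ that $\sim(x\Leftrightarrow\sim x)>0$ holds precisely when $x$ is not a negation fixpoint (recall from the preliminaries that an MTL-chain has at most one such fixpoint). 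Therefore $\Vert\text{PREDEF}_\varphi\Vert^\mathcal{A}_{\mathbf{M},v}>0$ forces $r_P(\vv{a})$ to avoid the fixpoint for every predicate $P$ of $\varphi$ and every tuple $\vv{a}$; that is, every atomic subformula of $\varphi$ takes, under every evaluation, a value that is not the negation fixpoint.

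It then remains to propagate this from atomic subformulas to all subformulas $\psi$. Here I would use that $\varphi$ is \emph{classical}, built only from $\land,\vee,\neg,\forall$. I would argue by structural induction that if no atomic value is the fixpoint, then no subformula value is either. The cases $\land,\vee$ are handled by noting that $\sqcap,\sqcup$ return one of their arguments in a chain, so the value of $\psi_1\land\psi_2$ or $\psi_1\vee\psi_2$ is literally the value of one of the subformulas, which by induction is not the fixpoint; the case of $\forall$ is likewise a $\min$ and so returns one of the (non-fixpoint) subvalues; and for $\neg$ one uses that $\sim$ maps the fixpoint to itself and is a bijection on an MV-chain, so $\sim\psi$ equals the fixpoint iff $\psi$ does. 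The main obstacle is the negation step: I must verify that in a general MV-chain the involution $\sim$ genuinely has the fixpoint as its only fixed point and permutes the remaining elements, so that applying $\sim$ cannot create a fixpoint value out of a non-fixpoint value — this is exactly where the MV-structure (strong involutivity $\sim\sim x = x$), rather than mere WNM or MTL structure, is essential, which is why the lemma is stated for MV-chains.
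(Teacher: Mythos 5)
Your overall strategy coincides with the paper's proof: part (1) follows because $\text{PREDEF}_\varphi$ is a $\land$-conjunction of \emph{closed} universally quantified formulas, so its truth value does not depend on the evaluation; part (2) is a structural induction over the classical connectives, in which $\sqcap$, $\sqcup$ and (on a finite model) the minimum realizing $\forall$ each return one of their arguments, while $\sim$ reflects the fixpoint by involutivity. In fact you give more detail than the paper, which labels every case except $\forall$ as immediate.

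There is, however, one step whose justification as written would fail. You establish the pointwise characterization ``$\sim(x\Leftrightarrow\sim x)>0$ iff $x$ is not the negation fixpoint'' by computing in $[0,1]_{\text{\L}}$ and then transferring it to an arbitrary MV-chain ``by standard completeness and the fact that every MV$_n$-chain embeds into $[0,1]_{\text{\L}}$.'' Neither cited fact yields this transfer: standard completeness (generation of the variety by $[0,1]_{\text{\L}}$) only transfers \emph{equations}, whereas your claim involves a strict inequality --- at best a quasi-equation, and quasi-equations do not automatically pass from a generating algebra to every algebra in the variety it generates (one would need an embedding theorem into ultrapowers, such as Di Nola's); and the embedding fact you cite covers only the finite chains $\mathbf{L}_k$, while the lemma concerns arbitrary MV-chains --- a non-archimedean MV-chain such as Chang's algebra does not embed into $[0,1]_{\text{\L}}$ at all. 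Fortunately the claim admits a one-line direct proof valid in every MV-chain, using exactly the involutivity you invoke later for the negation case: in an MV-chain $\sim w=0$ implies $w=\sim\sim w=\sim 0=1$, and in any MTL-chain $(x\Rightarrow \sim x)\sqcap(\sim x\Rightarrow x)=1$ iff $x=\sim x$; hence $\sim\bigl((x\Rightarrow \sim x)\sqcap(\sim x\Rightarrow x)\bigr)>0$ iff $x\neq\sim x$. With that replacement your argument is correct and follows essentially the same route as the paper's.
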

\begin{proof}
\begin{enumerate}
\item Immediate, because PREDEF$_\varphi$ is a $\land$ conjunction of universally quantified closed formulas, and $\mathbf{M}$ is a finite model.
\item By structural induction over $\varphi$ (recall that $\varphi$ is classical): the cases in which $\varphi$ is atomic or has the form $\neg\psi$, $\psi\land\chi$, $\psi\vee\chi$ are immediate. We now analyze the case in which $\varphi\df(\forall x)\psi$, and that the claim holds for $\psi$. By the first part of the lemma and the hypothesis, we have that $\Vert\text{PREDEF}_\varphi\Vert_{\mathbf{M},w}^\mathcal{A}>0$, for every evaluation $w$: hence by the induction hypothesis we have that $\Vert\psi\Vert^\mathcal{A}_{\mathbf{M},w}$ is not a negation fixpoint, for every evaluation $w$. Since $\mathbf{M}$ is finite, it follows that $\Vert(\forall x)\psi\Vert^\mathcal{A}_{\mathbf{M},v}$ is not a negation fixpoint, and this concludes the proof.
\end{enumerate}

\end{proof}
\begin{lemma}\label{lem:luk1}
Let $\mathcal{A}$ be an MV-chain. For every finite $\mathcal{A}$-model $\mathbf{M}$, construct a finite $\mathbf{2}$-model $\mathbf{M}'$, with $M'=M$, and such that, for every atomic formula $\psi$, and valuation $v$, $\Vert\psi\Vert^\mathbf{2}_{\mathbf{M}',v}=1$ if $\Vert\psi\Vert^\mathcal{A}_{\mathbf{M},v}\in A^+$ and $\Vert\psi\Vert^\mathbf{2}_{\mathbf{M}',v}=0$ otherwise. 

We have that, for every classical formula $\varphi$, if $\Vert\text{PREDEF}_\varphi\Vert_{\mathbf{M},v}^\mathcal{A}>0$, for some evaluation $v$, then it holds that: 
\begin{equation*}
\Vert\varphi\Vert^\mathcal{A}_{\mathbf{M},v}\in A^+\qquad\text{iff}\qquad\Vert\varphi\Vert^\mathbf{2}_{\mathbf{M}',v}=1.
\end{equation*}
\end{lemma}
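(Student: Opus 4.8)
The plan is to proceed by structural induction over the classical formula $\varphi$, carrying along the standing hypothesis that $\Vert\text{PREDEF}_\varphi\Vert_{\mathbf{M},v}^\mathcal{A}>0$. By \Cref{pred}(1) this hypothesis is independent of the evaluation, so I may freely invoke it at every node of the induction; and by \Cref{pred}(2) it guarantees that no subformula of $\varphi$ ever evaluates to the negation fixpoint of $\mathcal{A}$. This last fact is the true engine of the proof: in an MV-chain the only element $x$ with $x\notin A^+$ and $\mathop{\sim}x\notin A^+$ is precisely the fixpoint, so once fixpoints are excluded, membership in $A^+$ behaves like a two-valued predicate that commutes with the classical connectives. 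Concretely, for $x$ not a fixpoint one checks that $x\in A^+$ iff $\mathop{\sim}x\notin A^+$, which is exactly the clause needed for negation.

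First I would dispatch the base case: if $\varphi$ is atomic, the equivalence $\Vert\varphi\Vert^\mathcal{A}_{\mathbf{M},v}\in A^+$ iff $\Vert\varphi\Vert^\mathbf{2}_{\mathbf{M}',v}=1$ holds by the very definition of $\mathbf{M}'$. For the induction step I would treat the connectives in turn. For $\varphi=\mathop{\neg}\psi$, I use that $\Vert\psi\Vert^\mathcal{A}_{\mathbf{M},v}$ is not a fixpoint (from \Cref{pred}(2)) to conclude $\Vert\mathop{\neg}\psi\Vert^\mathcal{A}_{\mathbf{M},v}=\mathop{\sim}\Vert\psi\Vert^\mathcal{A}_{\mathbf{M},v}\in A^+$ iff $\Vert\psi\Vert^\mathcal{A}_{\mathbf{M},v}\notin A^+$, and match this against the classical $\mathbf{2}$-clause $\Vert\mathop{\neg}\psi\Vert^\mathbf{2}_{\mathbf{M}',v}=1$ iff $\Vert\psi\Vert^\mathbf{2}_{\mathbf{M}',v}=0$, closing the case by the induction hypothesis on $\psi$. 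For $\varphi=\psi\vee\chi$, since $A^+$ is upward closed and $\sqcup$ is the max, membership of the join in $A^+$ is equivalent to membership of at least one disjunct; for $\varphi=\psi\land\chi$, dually, using that $\sqcap$ is the min and that $A^+$ is an up-set, so the meet lies in $A^+$ iff both conjuncts do. In each case this mirrors the boolean truth table, and the induction hypotheses finish the argument.

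The quantifier case $\varphi=(\forall x)\psi$ is where the finiteness of $\mathbf{M}$ and the uniform form of $\text{PREDEF}_\varphi$ are needed together. Because $M$ is finite, $\Vert(\forall x)\psi\Vert^\mathcal{A}_{\mathbf{M},v}$ is an actual minimum over finitely many evaluations $w\equiv_x v$, and by \Cref{pred}(2) every $\Vert\psi\Vert^\mathcal{A}_{\mathbf{M},w}$ avoids the fixpoint; since $A^+$ is upward closed, the minimum lies in $A^+$ exactly when each $\Vert\psi\Vert^\mathcal{A}_{\mathbf{M},w}$ does. Applying the induction hypothesis to $\psi$ at each such $w$, this holds iff $\Vert\psi\Vert^\mathbf{2}_{\mathbf{M}',w}=1$ for all $w\equiv_x v$, i.e. iff $\Vert(\forall x)\psi\Vert^\mathbf{2}_{\mathbf{M}',v}=1$, as required.

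I expect the main obstacle to be the bookkeeping around the negation fixpoint in the negation and implication-free fragment: the equivalence ``$x\in A^+$ iff $\mathop{\sim}x\notin A^+$'' is only valid away from the fixpoint, so the whole argument hinges on never losing the guarantee supplied by \Cref{pred}(2). The one subtlety is ensuring that the $\text{PREDEF}$ hypothesis genuinely propagates to every subformula; but since $\text{PREDEF}_\varphi$ is defined as the $\land$-conjunction over all atoms of $\varphi$, the hypothesis for $\varphi$ entails the corresponding hypothesis for each subformula, so the induction goes through without strengthening the statement.
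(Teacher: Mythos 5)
Your proposal is correct and follows essentially the same route as the paper's proof: structural induction using \Cref{pred} for evaluation-independence and for excluding the negation fixpoint, with the negation case resting on the observation that away from the fixpoint $x\in A^+$ iff $\mathop{\sim} x\notin A^+$. The only difference is that you spell out the $\land$, $\vee$ and quantifier cases via upward-closure of $A^+$, which the paper dismisses as immediate; this is a matter of detail, not of method.
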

\begin{proof}
By structural induction over $\varphi$, assuming that $\Vert\text{PREDEF}_\varphi\Vert_{\mathbf{M},v}^\mathcal{A}>0$, for some evaluation $v$.
\begin{itemize}
\item If $\varphi$ is atomic, or has the form $\psi\land\chi$ or $\psi\vee\chi$, then the claim is immediate by the definition of $\mathbf{M}'$, and the induction hypothesis.
\item Suppose that $\varphi\df\neg\psi$, and that the claim holds for $\psi$. If $\Vert\varphi\Vert^\mathcal{A}_{\mathbf{M},v}\in A^+$, then $\Vert\psi\Vert^\mathcal{A}_{\mathbf{M},v}\notin A^+$, and this implies that $\Vert\psi\Vert^\mathbf{2}_{\mathbf{M}',v}=0$ and $\Vert\neg\psi\Vert^\mathbf{2}_{\mathbf{M}',v}=1$. If $\Vert\varphi\Vert^\mathcal{A}_{\mathbf{M},v}\notin A^+$, then $\Vert\psi\Vert^\mathcal{A}_{\mathbf{M},v}\in A^+$: indeed, if not, then $\Vert\psi\Vert^\mathcal{A}_{\mathbf{M},v}$ would be a negation fixpoint, in contrast with \Cref{pred}. Hence $\Vert\psi\Vert^\mathcal{A}_{\mathbf{M},v}\in A^+$, that implies $\Vert\psi\Vert^\mathbf{2}_{\mathbf{M}',v}=1$ and $\Vert\neg\psi\Vert^\mathbf{2}_{\mathbf{M}',v}=0$.
\item Finally, suppose that $\varphi\df(\forall x)\psi$, and that the claim holds for $\psi$. We have that $\Vert(\forall x)\psi\Vert^\mathcal{A}_{\mathbf{M},v}\in A^+$ iff for every evaluation $w$, $\Vert\psi\Vert^\mathcal{A}_{\mathbf{M},w}\in A^+$, that happens iff (by the induction hypothesis and \Cref{pred}) $\Vert\psi\Vert^\mathbf{2}_{\mathbf{M}',w}=1$ for every evaluation $w$, that happens iff $\Vert(\forall x)\psi\Vert^\mathbf{2}_{\mathbf{M}',v}=1$.
\end{itemize}
\end{proof}
\begin{lemma}\label{lem:luk}
Let $\mathcal{A}$ be an MV-chain, and $\varphi$ be a first-order classical formula. Let $\varphi^*$ be $\neg\text{PREDEF}_\varphi\vee (\neg\varphi\to\varphi)$. Then,
\begin{equation*}
\varphi^*\in \text{fTAUT}^\mathcal{A}_\forall\qquad\text{iff}\qquad \varphi\in \text{fTAUT}^{\mathbf{2}}_\forall.
\end{equation*}
Hence fTAUT$^\mathcal{A}_\forall$ is $\Pi_1$-hard.
\end{lemma}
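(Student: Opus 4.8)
For an MV-chain $\mathcal{A}$ and classical first-order formula $\varphi$, setting $\varphi^* \df \neg\text{PREDEF}_\varphi \vee (\neg\varphi \to \varphi)$, we have $\varphi^* \in \text{fTAUT}^\mathcal{A}_\forall$ iff $\varphi \in \text{fTAUT}^{\mathbf{2}}_\forall$; hence $\text{fTAUT}^\mathcal{A}_\forall$ is $\Pi_1$-hard.

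Let me plan the proof.

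The plan is to reduce classical Trakhtenbrot (Theorem~\ref{trak}, the $\Pi_1$-completeness of $\text{fTAUT}^{\mathbf{2}}_\forall$) to $\text{fTAUT}^\mathcal{A}_\forall$ via the map $\varphi \mapsto \varphi^*$. Since $\varphi^*$ is computable from $\varphi$, establishing the displayed biconditional gives a many-one reduction of the $\Pi_1$-hard set $\text{fTAUT}^{\mathbf{2}}_\forall$ into $\text{fTAUT}^\mathcal{A}_\forall$, yielding $\Pi_1$-hardness. So the entire content is the biconditional, and the real work is analyzing when $\varphi^*$ evaluates to $1$ in a finite $\mathcal{A}$-model.

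First I would unwind the semantics of $\varphi^*$ on a fixed finite $\mathcal{A}$-model $\mathbf{M}$ under an evaluation $v$. The formula $\text{PREDEF}_\varphi$ forces every atomic subformula away from the negation fixpoint; by \Cref{pred}, $\Vert\text{PREDEF}_\varphi\Vert_{\mathbf{M},v}^\mathcal{A}$ being positive is independent of $v$, and in that case no subformula of $\varphi$ hits the fixpoint. The disjunct $\neg\text{PREDEF}_\varphi$ acts as a guard: if $\text{PREDEF}_\varphi$ fails to be positive somewhere, then $\neg\text{PREDEF}_\varphi$ contributes a value that I would argue forces $\Vert\varphi^*\Vert = 1$, so such ``degenerate'' models do not obstruct the left-to-right direction. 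Conversely, when $\Vert\text{PREDEF}_\varphi\Vert^\mathcal{A}_{\mathbf{M},v}>0$, the disjunct $\neg\text{PREDEF}_\varphi$ is not equal to $1$ (it is in fact $0$ in an MV-chain whenever its argument is positive, since $\neg$ is an involution there), so $\Vert\varphi^*\Vert = 1$ reduces to analyzing $\Vert\neg\varphi\to\varphi\Vert^\mathcal{A}_{\mathbf{M},v}$. The key algebraic fact in an MV-chain is that $\sim x \Rightarrow x = 1$ iff $x \geq \sim x$, i.e.\ iff $x \in A^+ \cup \{\text{fixpoint}\}$; combined with the fixpoint exclusion from \Cref{pred}, this gives $\Vert\neg\varphi\to\varphi\Vert^\mathcal{A}_{\mathbf{M},v}=1$ iff $\Vert\varphi\Vert^\mathcal{A}_{\mathbf{M},v}\in A^+$.

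The decisive step is then \Cref{lem:luk1}: under the standing hypothesis $\Vert\text{PREDEF}_\varphi\Vert^\mathcal{A}_{\mathbf{M},v}>0$, the Boolean collapse $\mathbf{M}\mapsto\mathbf{M}'$ satisfies $\Vert\varphi\Vert^\mathcal{A}_{\mathbf{M},v}\in A^+$ iff $\Vert\varphi\Vert^{\mathbf{2}}_{\mathbf{M}',v}=1$. Stringing these equivalences together, for any model where the guard is positive, $\Vert\varphi^*\Vert^\mathcal{A}_{\mathbf{M},v}=1$ iff $\Vert\varphi\Vert^{\mathbf{2}}_{\mathbf{M}',v}=1$. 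For the full biconditional of the lemma I would argue both directions over all finite models. If $\varphi \in \text{fTAUT}^{\mathbf{2}}_\forall$, then for every finite $\mathcal{A}$-model $\mathbf{M}$ either the guard fails (forcing $\Vert\varphi^*\Vert=1$) or it holds and the induced $\mathbf{2}$-model validates $\varphi$ (again forcing $\Vert\varphi^*\Vert=1$), so $\varphi^*\in\text{fTAUT}^\mathcal{A}_\forall$. For the contrapositive of the other direction, given a finite $\mathbf{2}$-model $\mathbf{M}'$ with $\Vert\varphi\Vert^{\mathbf{2}}_{\mathbf{M}',v}\neq 1$, I would lift it to an $\mathcal{A}$-model $\mathbf{M}$ by assigning atoms the value $1$ where they are true and some fixed element of $A \setminus (A^+\cup\{\text{fixpoint}\})$, e.g.\ $0$, where false; this makes the guard positive and realizes $\Vert\varphi\Vert^\mathcal{A}_{\mathbf{M},v}\notin A^+$, so $\Vert\varphi^*\Vert^\mathcal{A}_{\mathbf{M},v}\neq 1$ and $\varphi^*\notin\text{fTAUT}^\mathcal{A}_\forall$.

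The main obstacle I anticipate is the careful bookkeeping around the guard disjunct in the degenerate case: one must verify that whenever $\text{PREDEF}_\varphi$ is not positive, the value of $\neg\text{PREDEF}_\varphi$ genuinely pushes the whole disjunction to $1$, rather than to some intermediate truth value. In an MV-chain $\sim$ is an involution, so $\sim x > 0$ exactly when $x < 1$, and a guard value that is merely positive-but-not-$1$ need not make $\neg\text{PREDEF}_\varphi$ equal to $1$; this is precisely why the hypothesis of \Cref{pred} (positivity, hence fixpoint-freeness) is phrased as $>0$ rather than $=1$, and why restricting to classical $\varphi$ and to the two extreme assignments in the lifting step is essential. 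Checking that the lifting in the backward direction really keeps $\text{PREDEF}_\varphi$ positive — i.e.\ that assigning only the values $0$ and $1$ to atoms avoids the fixpoint for every atomic subformula — is the one place where the MV-specific structure must be invoked explicitly, and it is where I would concentrate the detailed verification.
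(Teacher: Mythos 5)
Your proof is correct and is essentially the paper's own argument: both hinge on \Cref{pred}, the Boolean collapse of \Cref{lem:luk1}, the fact that $\mathbf{2}$ is a subalgebra of every MV-chain, and \Cref{trak} for the hardness conclusion, the only difference being organizational, in that you handle the two implications in contrapositive form relative to the paper (it proves left-to-right by restricting $\varphi^*$ to the subalgebra $\mathbf{2}$ and invoking $\vdash_{\text{BOOL}\forall}(\neg\varphi\to\varphi)\to\varphi$, and right-to-left by collapsing an $\mathcal{A}$-countermodel of $\varphi^*$ via \Cref{lem:luk1}, whereas you lift a $\mathbf{2}$-countermodel of $\varphi$ to an $\mathcal{A}$-countermodel of $\varphi^*$ and prove right-to-left directly by the case split on the guard). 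One parenthetical slip should be fixed: it is false that $\Vert\neg\text{PREDEF}_\varphi\Vert^{\mathcal{A}}_{\mathbf{M},v}=0$ whenever $\Vert\text{PREDEF}_\varphi\Vert^{\mathcal{A}}_{\mathbf{M},v}>0$ (in $[0,1]_\text{\L}$ one has $\sim 0.3=0.7$); what your argument actually needs, and what holds by residuation in any MTL-chain, is only that $\sim x=1$ iff $x=0$, so that a positive value of $\text{PREDEF}_\varphi$ keeps the guard disjunct strictly below $1$ and the reduction to analyzing $\neg\varphi\to\varphi$ goes through unchanged.
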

\begin{proof}

Suppose that $\varphi^*\in\text{fTAUT}_\forall^\mathcal{A}$: then $\varphi^*\in\text{fTAUT}_\forall^\mathbf{2}$, being $\mathbf{2}$ a subalgebra of $\mathcal{A}$. In particular, $\text{PREDEF}_\varphi\in\text{fTAUT}_\forall^\mathbf{2}$, and hence we must have that $\neg\varphi\to\varphi\in\text{fTAUT}_\forall^\mathbf{2}$. Now, since $\vdash_{\text{BOOL}\forall} (\neg\varphi\to\varphi)\to\varphi$, it follows that $\varphi\in\text{fTAUT}_\forall^\mathbf{2}$.

Conversely, suppose that $\varphi^*\notin\text{fTAUT}_\forall^\mathcal{A}$: then there is a finite $\mathcal{A}$-model $\mathbf{M}$, and an evaluation $v$ such that $\Vert\varphi^*\Vert^\mathcal{A}_{\mathbf{M},v}<1$. It follows that $\Vert\text{PREDEF}_\varphi\Vert^\mathcal{A}_{\mathbf{M},v}>0$, and $\Vert\varphi\Vert^\mathcal{A}_{\mathbf{M},v}<\Vert\neg\varphi\Vert^\mathcal{A}_{\mathbf{M},v}$: i.e. $\Vert\varphi\Vert^\mathcal{A}_{\mathbf{M},v}\notin A^+$. By \Cref{lem:luk1} we can construct a finite $\mathbf{2}$-model $\mathbf{M}'$ such that $\Vert\varphi\Vert^\mathbf{2}_{\mathbf{M}',v}=0$; hence $\varphi\notin\text{fTAUT}_\forall^\mathbf{2}$. 

By \Cref{trak}, $\text{fTAUT}^{\mathbf{2}}_\forall$ is $\Pi_1$-complete, and hence  $\text{fTAUT}^\mathcal{A}_\forall$ is $\Pi_1$-hard.
\end{proof}
We can finally return to the proof of our main theorem.
\begin{proof}[Proof of \Cref{inc}]
Let L be an axiomatic extension of MTL s.t. TAUT$_\text{L}$ is decidable, and whose corresponding variety is generated by a chain. By \Cref{main}, for every generic L-chain $\mathcal{A}$, fTAUT$^\mathcal{A}_\forall$ is $\Pi_1$.

Note: in the rest of the proof we assume to work with \emph{first-order} formulas.

Let L be an axiomatic extension of BL s.t. TAUT$_\text{L}$ is decidable and whose corresponding variety is generated by a chain. We show that, for every generic L-chain $\mathcal{A}$, fTAUT$^\mathcal{A}_\forall$ is $\Pi_1$-complete: it remains only to prove the hardness.

By \cite[Theorem 3.7]{am} we have that every BL-chain is isomorphic to an ordinal sum of Wajsberg hoops (see \cite{am,eghm} for details about ordinal sums and hoops) whose first component is an MV-chain. Let $\mathcal{A}$ be a generic L-chain, and let $\mathcal{B}$ be its first component, in the decomposition as ordinal sum\footnote{Clearly, if L is an extension of {\L}, then $\mathcal{A}$ and $\mathcal{B}$ coincide.}.

For every formula $\varphi$ let $t(\varphi)$ the formula obtained from $\varphi$ by replacing its atomic formulas with their double negations. Take $x\in A$: by the definition of ordinal sum, if $x$ belongs to the first component, then $\sim\sim x=x$, otherwise $\sim\sim x=1$. As a consequence for every formula $\varphi$:
\begin{equation*}
t(\varphi)\in \text{fTAUT}^\mathcal{A}_{\forall}\qquad\text{iff}\qquad \varphi\in \text{fTAUT}^{\mathcal{B}}_{\forall}.
\end{equation*}
By \Cref{main} and \Cref{lem:luk} we have that $\text{fTAUT}^{\mathcal{B}}_{\forall}$ is $\Pi_1$-complete. Since the translation $t$ is computable in a low complexity time, it follows that fTAUT$^\mathcal{A}_{\forall}$ is $\Pi_1$-complete. 

In \cite{dum,prodcig} it is shown that a G\"odel chain (product chain) is generic for the variety of G\"odel (product) algebras if and only if it is infinite, and hence for $\text{L}\in \text{G},\Pi$ the set fTAUT$^\mathcal{A}_{\forall}$ is $\Pi_1$-complete for every infinite L-chain $\mathcal{A}$.

Let now L be an axiomatic extension of SMTL s.t. TAUT$_\text{L}$ is decidable and whose corresponding variety is generated by a chain. For every generic L-chain $\mathcal{A}$, we show the $\Pi_1$-completeness of fTAUT$^\mathcal{A}_{\forall}$ by recursively reducing it to the classical case. 

For every formula $\varphi$ let $t(\varphi)$ be the formula obtained from $\varphi$ by replacing its atomic formulas with their double negations. Since for every L-chain and element $x$ it holds that $\sim\sim x=1$ if $x>0$, and $\sim\sim x=0$ if $x=0$, then we have that, for every generic L-chain $\mathcal{A}$, and formula $\varphi$:
\begin{equation*}
t(\varphi)\in \text{fTAUT}^\mathcal{A}_{\forall}\qquad\text{iff}\qquad \varphi\in \text{fTAUT}^{\mathbf{2}}_{\forall}.
\end{equation*}
Since the translation $t$ is computable in a low complexity time, it follows that fTAUT$^\mathcal{A}_{\forall}$ is $\Pi_1$-complete. 

Suppose that L is an extension of WNM s.t. TAUT$_\text{L}$ is decidable, and whose corresponding variety is generated by a chain. By \Cref{lem:gc}, and \Cref{lem:gc1} we have that for every generic L-chain $\mathcal{A}$, and formula $\varphi$:
\begin{equation*}
\varphi^*\in \text{fTAUT}^\mathcal{A}_{\forall}\qquad\text{iff}\qquad \varphi\in \text{fTAUT}^{\mathcal{A}_G}_{\forall}.
\end{equation*}
By the first part of the proof we know that fTAUT$^{\mathcal{A}_G}_{\forall}$ is $\Pi_1$-complete: moreover, as shown in \cite{dm}, every G\"odel chain generates $\mathbb{G}$ (if it is infinite), or $\mathbb{G}_k$, for some $k$ (if it is finite and has $k$ elements). For $k=2$, clearly $\mathbb{G}_k$ is the variety of boolean algebras. It follows that fTAUT$^\mathcal{A}_{\forall}$ is $\Pi_1$-complete. 

Finally, if L is DP (the fact that TAUT$_\text{DP}$ is decidable easily follows from the results of \cite{abv}), then as pointed out in \cite{abv}, a DP-chain is generic if and only if it is infinite. It follows that fTAUT$^\mathcal{A}_{\forall}$ is $\Pi_1$-complete, for every infinite DP-chain.
\end{proof}
\noindent As a consequence of \Cref{inc} we have that:
\begin{corollary}
Let L be one of BL, BL$_n$, {\L}, {\L}$_n$, G, G$_n$, $\Pi$, SMTL, SBL, SBL$^n$, SBL$_n$, WNM, NM, NMG, RDP, DP, and $\mathcal{A}$ be a generic L-chain. Then the set fTAUT$^\mathcal{A}_{L\forall}$ is $\Pi_1$-complete.
\end{corollary}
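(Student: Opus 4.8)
The plan is to obtain the Corollary as a direct application of \Cref{inc}, so the whole task reduces to verifying, for each logic L on the list, the three standing hypotheses of that theorem: that L is an axiomatic extension of MTL with $\text{TAUT}_\text{L}$ decidable, that the variety $\mathbb{L}$ is generated by a single chain (so that generic L-chains exist and the statement is not vacuous), and that L falls under one of the three umbrella classes (extension of BL, of SMTL, or of WNM) or is one of the three logics G, $\Pi$, DP singled out in the final clause of \Cref{inc}. Once each logic has been placed, the $\Pi_1$-completeness of $\text{fTAUT}^\mathcal{A}_\forall$ for a generic L-chain $\mathcal{A}$ follows immediately.

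First I would sort the list by the divisibility axiom (div). The logics BL, BL$_n$, {\L}, {\L}$_n$, $\Pi$, SBL, SBL$^n$, SBL$_n$, G and G$_n$ all contain (div) and are therefore extensions of BL. Several of these in addition satisfy the Stonean axiom (s) --- namely $\Pi$, SBL, SBL$^n$, SBL$_n$, G, G$_n$ --- and so are also extensions of SMTL, while SMTL itself is covered by the SMTL clause directly. The remaining logics WNM, NM, NMG and RDP are by definition extensions of WNM, and G, G$_n$ belong to this class as well since (id) forces $\&$ and $\land$ to coincide, making (wnm) trivially valid. Thus every listed logic except DP lies under at least one umbrella class, and G, $\Pi$, DP are exactly the three logics whose generic chains are the infinite ones, matching the final sentence of \Cref{inc}.

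Next I would supply the two ingredients coming from the literature. Decidability of $\text{TAUT}_\text{L}$ holds throughout: for BL, {\L}, G and $\Pi$ it is classical (\cite{haj}); for the finitely valued families {\L}$_n$, G$_n$, BL$_n$, SBL$^n$ and SBL$_n$ it follows from their being generated by finite chains; and for SMTL, SBL, WNM, NM, NMG, RDP and DP it is recorded in the cited references (\cite{nog,sbl,abv,dm}). That each variety is generated by a single chain is likewise known: MV$_n$ is generated by $\mathbf{L}_n$ (as recalled in the excerpt), the remaining finitely valued varieties by the corresponding finite chains, and BL, {\L}, G, $\Pi$, SMTL, SBL, WNM, NM, NMG, RDP and DP each by a single, possibly infinite, chain (\cite{am,dum,prodcig,nog,abv}). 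With both facts secured, \Cref{inc} applies verbatim to each entry of the list.

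The argument is essentially bookkeeping, and the only points requiring genuine care are the class placements and the single-chain generation for the finitely valued families. In particular DP does \emph{not} fall under any of the three umbrella classes: a finite drastic-product chain such as $\{0,a,1\}$ carries a negation fixpoint $\sim\! a = a$ (so (s) fails), whence DP is neither an extension of SMTL nor a G\"odel-style logic, and it must instead be routed through the G, $\Pi$, DP clause, using that a DP-chain is generic iff it is infinite. Confirming this special case, and confirming that the finitely valued varieties really are singly generated so that generic chains exist at all, is where I expect to concentrate the effort; the rest is a mechanical check against \Cref{inc}.
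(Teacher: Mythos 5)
Your plan is the paper's plan: the paper's proof of this Corollary consists exactly of citing the literature for (i) single-chain generation of each variety and (ii) decidability of TAUT$_\text{L}$, and then invoking \Cref{inc}; your class placements under the BL/SMTL/WNM umbrellas are also essentially the ones implicit there. The genuine problem is your treatment of BL$_n$, SBL$^n$ and SBL$_n$: these varieties are \emph{not} generated by finite chains, so neither decidability nor single-chain generation can be obtained the way you propose for them (your argument is fine for {\L}$_n$ and G$_n$). Indeed, every G\"odel chain --- including every infinite one --- satisfies (div), (s), ($c_n$) (idempotency gives $x^n=x^{n+1}=x$), and each ($d_{n,m}$) with $m\geq 2$ (a direct computation using $x*x=x$ reduces it to $\varphi\to\varphi$); hence every G\"odel chain belongs to each of these three varieties. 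On the other hand, the variety generated by a finite chain validates (g$_k$) for $k$ large enough (in a finite chain any sufficiently long tuple cannot be strictly decreasing, so some disjunct $x_i\to x_{i+1}$ equals $1$), and (g$_k$) fails in every infinite chain. So no finite chain generates BL$_n$, SBL$^n$ or SBL$_n$; in fact any generic chain for these varieties is necessarily infinite. The facts you need are true, but they must be taken from the cited literature (\cite{bln,sbl}), as the paper does, rather than derived from finite generation.

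A second, less damaging, error: DP \emph{is} an extension of WNM, contrary to your claim. In a DP-chain every $x<1$ satisfies $x*x=0$, so for $x\leq y$ either $y=1$ and $x*y=x=x\sqcap y$, or $y<1$ and $x*y\leq y*y=0$, whence $\sim(x*y)=1$; thus (wnm) holds in every DP-chain, and by completeness of DP w.r.t.\ its chains, DP proves (wnm). Your negation-fixpoint computation only shows that DP is not an extension of SMTL. This slip does not break your argument, since routing DP through the final $\{\text{DP},\text{G},\Pi\}$ clause of \Cref{inc}, together with the fact from \cite{abv} that a DP-chain is generic iff it is infinite, does yield the Corollary for DP (the paper does the same); but the claim itself is false, and DP could equally well have been discharged by the WNM clause. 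Once the faulty finite-generation justification is replaced by the literature citations, your proof becomes essentially identical to the paper's.
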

\begin{proof}
Let L be one of these logics. The fact that all the corresponding varieties are generated by an L-chain is shown in \cite{blvar,bln,chang,grig,dum,dm,prod,ssc1,nog,eg,nmg,rdp,abv}. Moreover, in \cite{bhmv,bln,munnp,haj,elm,agh,nogwnm,nmgc,bval} it is shown (or it is easy to check) that for all these logics TAUT$_\text{L}$ is decidable. Hence, by \Cref{inc}, we have that for every generic L-chain $\mathcal{A}$, fTAUT$^\mathcal{A}_{\forall}$ is $\Pi_1$-complete.
\end{proof}
\begin{remark}
In a personal communication, Félix Bou pointed out that some of the results of \Cref{inc} (in particular the ones concerning \L ukasiewicz logic and BL) can also be proved by using some of the results (still unpublished) that he presented to a conference in 2012 (see \cite{bou} for the presentation). 
\end{remark}
\begin{problem}
Let L be an axiomatic extension of IMTL (i.e. MTL plus \ref{inv}) whose corresponding variety is generated by a chain: given a generic L-chain $\mathcal{A}$, in which cases fTAUT$^\mathcal{A}_{\forall}$ is $\Pi_1$-complete?
\end{problem}
We conclude with a negative result, concerning the fSCC.
\begin{theorem}\label{noc}
Let L be an axiomatic extension of MTL s.t. TAUT$_\text{L}$ is decidable. Then the fSCC fails to hold, for L$\forall$. 
\end{theorem}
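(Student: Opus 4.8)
The plan is to argue by contradiction through a complexity mismatch. Suppose fSCC held, witnessed by an L-chain $\mathcal{A}$, so that for every sentence $\varphi$ one has $\vdash_{\text{L}\forall}\varphi$ iff $\varphi\in\text{fTAUT}^\mathcal{A}_{\forall}$. First I would observe that the set of theorems $\{\varphi:\vdash_{\text{L}\forall}\varphi\}$ is $\Sigma_1$ (recursively enumerable): since $\text{TAUT}_\text{L}$ is decidable we may take it as a recursive set of propositional axioms, so the calculus L$\forall$ is recursively axiomatised and its provable formulas can be enumerated by searching through proofs. Moreover, since the axioms of L$\forall$ are valid in every L-chain and both rules preserve the value $1$ on finite models, soundness gives $\{\varphi:\vdash_{\text{L}\forall}\varphi\}\subseteq\text{fTAUT}^\mathcal{A}_{\forall}$ for \emph{every} L-chain $\mathcal{A}$; so under fSCC the two sets coincide and $\text{fTAUT}^\mathcal{A}_{\forall}$ would be $\Sigma_1$. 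The whole argument therefore reduces to showing that $\text{fTAUT}^\mathcal{A}_{\forall}$ is \emph{not} $\Sigma_1$, for every L-chain $\mathcal{A}$.

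To do this I would build a computable many-one reduction of the classical problem $\text{fTAUT}^{\mathbf{2}}_\forall$, which is $\Pi_1$-complete (hence not recursively enumerable) by \Cref{trak}, into $\text{fTAUT}^\mathcal{A}_{\forall}$: a $\Pi_1$-hard set cannot be $\Sigma_1$, which is the desired contradiction. The reduction exploits the fact that $\{0,1\}$ is a subalgebra of every MTL-chain, so $\mathbf{2}$ embeds into $\mathcal{A}$ and every finite $\mathbf{2}$-model is a finite $\mathcal{A}$-model on which the classical connectives are computed identically. Thus the identity map already handles \emph{crisp} $\mathcal{A}$-models; the real issue is that $\mathcal{A}$ also admits models in which atoms take intermediate values, and these must be neutralised so that they cannot spuriously falsify the translated sentence.

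For the translation I would follow and generalise the mechanism of \Cref{lem:luk1} and \Cref{lem:luk}. Given a classical $\varphi$, I would round each atomic value through the up-set $A^+=\{x:x>\sim x\}$ (sending elements of $A^+$ to $1$ and the rest to $0$): on a chain this respects $\sqcap,\sqcup$ because $A^+$ is upward closed, and it respects $\sim$ away from the negation fixpoint, so it turns a finite $\mathcal{A}$-model into a finite $\mathbf{2}$-model agreeing with the classical reading of $\varphi$. To discard the bad models I would use a $\text{PREDEF}_\varphi$-style guard as in \Cref{pred}, taking the translated sentence to be $\neg\text{PREDEF}_\varphi\vee(\neg\varphi\to\varphi)$: the first disjunct forces value $1$ exactly on models where some atom is a negation fixpoint, while on the remaining models the core $\neg\varphi\to\varphi$ has value $1$ precisely when the rounded value of $\varphi$ is $1$. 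The resulting sentence should lie in $\text{fTAUT}^\mathcal{A}_{\forall}$ iff $\varphi\in\text{fTAUT}^{\mathbf{2}}_\forall$, and it is plainly computable from $\varphi$.

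The hard part will be the neutralisation step for a \emph{completely arbitrary} MTL-chain. In the MV case the rounding through $A^+$ is a Boolean homomorphism on the classical fragment once the single fixpoint $\tfrac12$ is excluded, which is exactly what $\text{PREDEF}_\varphi$ removes; but a general MTL-chain can have a whole interval of elements sent by $\sim$ onto the negation fixpoint $c$, so that for such an $x$ both $x$ and $\sim x=c$ fall outside $A^+$ and the rounding fails to commute with negation. Making the guard strong enough to kill all such models — equivalently, giving a first-order definable, computably guarded description of the Boolean skeleton of an arbitrary MTL-chain — is the genuine obstacle. I would expect to overcome it by a finer analysis of $\sim$ near the fixpoint (using that the fixpoint, when it exists, is unique) and by strengthening $\text{PREDEF}_\varphi$ to also exclude the preimage of the fixpoint; once the rounding is shown to be a homomorphism on the classical fragment, the reduction, and hence the failure of fSCC, follow at once.
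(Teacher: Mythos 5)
Your reduction of the theorem to the claim that fTAUT$^\mathcal{A}_{\forall}$ is never $\Sigma_1$ is sound as far as it goes: under fSCC the theorem set of L$\forall$ coincides with fTAUT$^\mathcal{A}_{\forall}$ by definition, and your observation that decidability of TAUT$_\text{L}$ yields a recursive axiomatization of L$\forall$ (hence a $\Sigma_1$ theorem set) is exactly the justification the paper leaves implicit. The genuine gap is the second half. You need $\Pi_1$-hardness of fTAUT$^\mathcal{A}_{\forall}$ for \emph{every} L-chain $\mathcal{A}$ of \emph{every} axiomatic extension L with decidable TAUT$_\text{L}$, and this is not a loose end that a ``finer analysis of $\sim$ near the fixpoint'' can be expected to close: it is precisely what the paper itself cannot do. The PREDEF guard of \Cref{pred}--\Cref{lem:luk} is an MV-chain argument, and the obstruction you name is real and occurs already inside WNM. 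On the four-element chain $0<a<c<1$ with $\sim a=\sim c=c$ and operations as in \Cref{eq:op}, one computes $a\leftrightarrow\neg a$ to have value $c$ and $\neg(a\leftrightarrow\neg a)=c$ as well, so on a model where an atom takes value $a$ the guard evaluates to the indeterminate value $c$, neither $0$ nor $1$, and the disjunction trick never fires cleanly; likewise the rounding through $A^+=\{1\}$ fails to commute with $\sim$ at $a$. This is exactly why the paper abandons the PREDEF mechanism outside the MV/BL setting and builds a completely different reduction for WNM (the squaring translation of \Cref{def:trg} together with the G\"odel-hoop analysis of \Cref{lem:gc} and \Cref{lem:gc1}); even with both mechanisms, \Cref{inc} only reaches extensions of BL, SMTL and WNM, and the hardness question for more general chains is posed in the paper as an open problem. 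So your plan, as written, requires solving an open problem, and for chains whose negation collapses a whole interval onto the fixpoint no guard of the kind you describe is known.

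The paper's actual proof of \Cref{noc} sidesteps hardness entirely, and this is the idea your proposal is missing. For a chain $\mathcal{A}$ that is not generic, fSCC fails for elementary reasons: a propositional formula valid in $\mathcal{A}$ but unprovable in L, rewritten with unary predicates and evaluated in one-element models over a separating L-chain $\mathcal{B}$, lies in fTAUT$^\mathcal{A}_{\forall}$ but is not a theorem of L$\forall$. For a generic $\mathcal{A}$, the paper uses only the $\Pi_1$ \emph{upper} bound of \Cref{main}, combined with the result of \cite{mono} that every logic between MTL$\forall$ and BOOL$\forall$ is undecidable (a proof that applies to purely relational languages with a binary predicate): if fSCC held, the theorem set of L$\forall$ would be simultaneously $\Sigma_1$ and $\Pi_1$, hence decidable, a contradiction. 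In short, the complexity mismatch that is actually available off the shelf is ``recursively enumerable versus undecidable,'' not ``recursively enumerable versus $\Pi_1$-hard''; replacing your hardness step by the citation to \cite{mono} turns your outline into the paper's proof.
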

\begin{proof}
Let L be a such logic. Note that every L-chain $\mathcal{A}$ that is not complete w.r.t. L cannot be complete w.r.t. L$\forall$, even if we restrict to finite models. Indeed, for a such chain there is a propositional formula $\varphi(x_1,\dots,x_k)$ such that $\mathcal{A}\models \varphi(x_1,\dots,x_k)$ and $\not\vdash_\text{L}\varphi(x_1,\dots,x_k)$. Hence there is an L-chain $\mathcal{B}$ and an evaluation $v$ such that $v(\varphi(x_1,\dots,x_k))<1$. Let $\psi$ be the first order formula obtained from $\varphi(x_1,\dots,x_k)$ by replacing every variable $x_i$ with a unary predicate $P_i(x_i)$. Take now a $\mathcal{B}$-model $\mathbf{M}$ and an evaluation $w$ such that $M=\{c\}$, and $\Vert P_i(x_i)\Vert_{\mathbf{M},w}^\mathcal{B}=v(x_i)$: clearly $\mathcal{B}\not\models\psi$, and hence $\text{L}\forall\not\vdash\psi$. Since $\mathcal{A}\models\varphi(x_1,\dots,x_k)$ it easy to check that $\Vert \psi\Vert_{\mathbf{M}',w'}^\mathcal{A}$, for every finite $\mathcal{A}$-model $\mathbf{M}'$, and every evaluation $w'$: hence $\psi\in\text{fTAUT}^\mathcal{A}_\forall$. 

So, for every extension L of MTL without a generic chain, the fSCC fails to hold, for L$\forall$.

Conversely, let now $\mathcal{A}$ be a generic L-chain: by \Cref{main},  fTAUT$^\mathcal{A}_{\forall}$ is $\Pi_1$. In \cite{mono} it is shown that every logic between MTL$\forall$ and BOOL$\forall$ is undecidable, and the proof applies also to the case of languages containing only predicates (with a least a binary one). Since TAUT$_{\text{L}\forall}$ is $\Sigma_1$ (being recursively enumerable), it cannot be also $\Pi_1$, otherwise it would be decidable, a contradiction. Hence L$\forall$ cannot be complete w.r.t. the finite models of $\mathcal{A}$.

It follows that, given an axiomatic extension L of MTL s.t. TAUT$_\text{L}$ is decidable, the fSCC fails to hold, for L$\forall$.
\end{proof}
\subsection{Axiomatic extensions of MTL with Baaz operator $\Delta$}
We conclude the paper by analyzing the axiomatic extensions of MTL expanded with the Baaz operator $\Delta$, firstly introduced in \cite{bdelta} (see \cite{hand,dist} for other details). For every axiomatic extension L of MTL, we denote with L$_\Delta$ its expansion with an operator $\Delta$ satisfying the following axioms,
\begin{align*}
\tag*{($\Delta$1)}&\Delta(\varphi) \vee \neg\Delta(\varphi).\\
\tag*{($\Delta$2)}&\Delta(\varphi \vee \psi)\rightarrow ((\Delta(\varphi) \vee \Delta(\psi))).\\
\tag*{($\Delta$3)}&\Delta(\varphi)\rightarrow \varphi.\\
\tag*{($\Delta$4)}&\Delta(\varphi)\rightarrow \Delta(\Delta(\varphi)).\\
\tag*{($\Delta$5)}&\Delta(\varphi\rightarrow \psi)\rightarrow (\Delta(\varphi)\rightarrow \Delta(\psi)).
\end{align*}
and the following additional inference rule: $\frac{\varphi}{\Delta\varphi}$.

We recall that on every MTL$_\Delta$-chain $\mathcal{A}$, if we call $\delta$ the algebraic corresponding to $\Delta$ connective, then for every $x \in A$, it holds that $\delta(x) = 1$ if $x = 1$, whilst $\delta(x) = 0$ if $x < 1$. Given an MTL-chain $\mathcal{A}$, with $\mathcal{A}_\Delta$ we denote its expansion with the $\delta$ operation.

\begin{theorem}\label{delta}
Let L be an axiomatic extension of MTL$_\Delta$ whose corresponding variety is generated by an L-chain. If TAUT$_\text{L}$ is decidable, then for every generic L-chain $\mathcal{A}$ it holds that fTAUT$^\mathcal{A}_{\forall}$ is $\Pi_1$-complete.
\end{theorem}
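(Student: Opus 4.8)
The plan is to prove the two bounds separately: the $\Pi_1$ upper bound by re-running \Cref{main}, and the matching lower bound by a direct reduction of the classical Trakhtenbrot problem that exploits the projection behaviour of $\delta$. For the upper bound, the only ingredient of \Cref{main} that needs adjustment is the propositional coding of \Cref{def:tr}: I would extend it with the clause $(\Delta\varphi)^{*,n}\df\Delta(\varphi^{*,n})$ and note that \Cref{lem:tr} still holds, since $\delta$ commutes with evaluation exactly as the other connectives do. As TAUT$_\text{L}$ is decidable and $\mathcal{A}$ is generic (so TAUT$^\mathcal{A}=$TAUT$_\text{L}$), the characterisation $\varphi\in\text{fTAUT}^\mathcal{A}_{\forall}$ iff $(\forall n)(\varphi^{*,n}\in\text{TAUT}^\mathcal{A})$ shows that fTAUT$^\mathcal{A}_{\forall}$ is $\Pi_1$.

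The substantial part is $\Pi_1$-hardness, and here the presence of $\Delta$ makes the argument uniform over all extensions of MTL$_\Delta$. Call a formula \emph{classical} if it is built only from $\land,\vee,\neg,\forall,\exists$, and define a translation $t$ on classical formulas that replaces each atom $P(\vec{x})$ by $\Delta P(\vec{x})$ and commutes with all the connectives and quantifiers. The heart of the proof is a Booleanisation lemma: for every finite $\mathcal{A}$-model $\mathbf{M}$ and evaluation $v$, one has $\Vert t(\varphi)\Vert^\mathcal{A}_{\mathbf{M},v}\in\{0,1\}$, and this value equals $\Vert\varphi\Vert^\mathbf{2}_{\mathbf{M}',v}$, where $\mathbf{M}'$ is the two-valued model on the same domain with $\Vert P(\vec{a})\Vert^\mathbf{2}_{\mathbf{M}'}=\delta(r_P(\vec{a}))$. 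I would prove this by structural induction: the atomic case uses that $\delta$ lands in $\{0,1\}$, and the inductive cases use that on $\{0,1\}$ the operations $\sqcap,\sqcup,\sim$ and the $\min,\max$ over a finite domain coincide with the classical $\land,\vee,\neg,\forall,\exists$ and preserve two-valuedness (in particular $\sim 0=1$ and $\sim 1=0$).

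From the Booleanisation lemma it follows immediately that $t(\varphi)\in\text{fTAUT}^\mathcal{A}_{\forall}$ iff $\varphi\in\text{fTAUT}^\mathbf{2}_{\forall}$: a Boolean countermodel of $\varphi$, read as an $\mathcal{A}$-model with atomic values in $\{0,1\}\subseteq A$, is a countermodel of $t(\varphi)$ since $\delta$ fixes $\{0,1\}$; conversely any finite $\mathcal{A}$-countermodel of $t(\varphi)$ projects, via $\mathbf{M}'$, to a Boolean countermodel of $\varphi$. Since $t$ is plainly computable, this is a many-one reduction of the $\Pi_1$-complete set fTAUT$^\mathbf{2}_{\forall}$ (\Cref{trak}) to fTAUT$^\mathcal{A}_{\forall}$; hence the latter is $\Pi_1$-hard, and together with the upper bound it is $\Pi_1$-complete.

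I expect the main work to lie entirely in the structural induction for the Booleanisation lemma, and even there the content is light: the $\Delta$ prefixing each atom establishes two-valuedness at the base case, and it is then propagated by $\sqcap,\sqcup,\sim$ and by $\min/\max$ over the finite domain. The real point — and the reason the theorem holds for \emph{every} extension of MTL$_\Delta$ satisfying the hypotheses — is that $\Delta$ supplies a definable projection onto $\{0,1\}$, so that, unlike in \Cref{inc}, no double-negation translation, no cut model, and no detour through a G\"odel hoop is required.
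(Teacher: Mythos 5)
Your proposal is correct and takes essentially the same approach as the paper: the same $\Pi_1$ upper bound via the $\Delta$-extended propositional coding of \Cref{def:tr}, and the same hardness reduction $t$ that prefixes every atom with $\Delta$ so that $\delta$ projects all atomic values onto $\{0,1\}$, thereby reducing the classical finite-model tautology problem of \Cref{trak} to fTAUT$^\mathcal{A}_{\forall}$. The only cosmetic differences are that you restrict $t$ to classical formulas and land directly in fTAUT$^{\mathbf{2}}_{\forall}$, spelling out the Booleanisation induction explicitly, whereas the paper applies $t$ to arbitrary formulas and passes through fTAUT$^{\mathbf{2}_\Delta}_{\forall}$, noting that $\Delta$ acts as the identity on $\mathbf{2}$.
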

\begin{proof}
We first need to modify \Cref{def:tr}, by adding the case $(\Delta\varphi)^{*,n}=\Delta(\varphi^{*,n})$: a direct inspection shows that \Cref{lem:tr} works also in this case. Again, with a proof almost identical to the one of \Cref{main}, we can show that fTAUT$^\mathcal{A}_{\text{L}\forall}$ is $\Pi_1$ (note that the requirement that TAUT$_\text{L}$ is decidable is essential). For every formula $\varphi$ let $t(\varphi)$ the formula obtained from $\varphi$ by replacing every atomic formula $A$ with $\Delta A$. Since for every L-chain and element $x$ it holds that $\delta(x)=0$ if $x<1$, and $\delta(x)=1$ if $x=1$, then we have that, for every generic L-chain $\mathcal{A}$, and formula $\varphi$:
\begin{equation*}
t(\varphi)\in \text{fTAUT}^\mathcal{A}_{\forall}\qquad\text{iff}\qquad \varphi\in \text{fTAUT}^{\mathbf{2}_\Delta}_{\forall}.
\end{equation*}
Now, it is easy to check that over classical logic $\varphi\leftrightarrow\Delta\varphi$ is a tautology: indeed, over $\mathbf{2}$, $\delta$ behaves like the identity map. In other terms, the addition of $\Delta$ does not change the expressive power of the language, in classical logic (propositional or first-order). Hence $\text{fTAUT}^{\mathbf{2}_\Delta}_{\forall}$ is $\Pi_1$-complete.
Since the translation $t$ is computable in a low complexity time, it follows that fTAUT$^\mathcal{A}_{\forall}$ is $\Pi_1$-complete.
\end{proof}
\begin{remark}
\begin{itemize}
\item One can ask if, given an axiomatic extension L of MTL having a generic L-chain $\mathcal{A}$, the chain $\mathcal{A}_\Delta$ is generic for $\mathbb{L}_\Delta$ or not. The answer is negative, in general: consider the subalgebra of standard MV-algebra with support given by all the rational between $0$ and $1$ with odd denominator, and call it $\mathcal{A}$. By \cite[Proposition 8.1.1]{mun} we have that $\mathcal{A}$ generates the variety of MV-algebras: however, $\mathcal{A}_\Delta$ is not generic for the variety of $\text{\L}_\Delta$-algebras. Indeed, it is easy to check that the formula 
\begin{equation}\label{fdelta}
\tag*{(f)}\Delta(\varphi\leftrightarrow\neg\varphi)\to\varphi
\end{equation}
holds in an MTL$_\Delta$-chain if and only if it does not have a negation fixpoint. Hence $\mathcal{A}_\Delta\models\text{\ref{fdelta}}$, whilst $[0,1]_{\text{\L}_\Delta}\not\models\text{\ref{fdelta}}$: hence $\text{\L}_\Delta\nvdash\text{\ref{fdelta}}$, and $\mathcal{A}_\Delta$ cannot be generic for the variety of \L$_\Delta$-algebras.

This counterexample shows how, for some logics, the addition of the $\Delta$ operator is non-trivial, in terms of the expressive power of the resulting logic (this is not the case for the classical one, as already explained). Indeed, with $\Delta$ we can construct formulas that capture algebraic properties that cannot be described without it.
\item More in general, one can ask if, given an axiomatic extension L of MTL enjoying the single chain completeness, its expansion L$_\Delta$ enjoys the SCC or not. Actually, it is an open problem and, in the light of the previous counterexample, it is also non-trivial.
\end{itemize}
\end{remark}
We conclude with a result analogous to \Cref{noc}.
\begin{theorem}\label{noc1}
Let L be an axiomatic extension of MTL$_\Delta$ such that $TAUT_{L}$ is decidable. Then the fSCC fails to holds, for L$\forall$.
\end{theorem}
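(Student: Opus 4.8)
The goal is to prove that the finite single chain completeness (fSCC) fails for $\text{L}\forall$ whenever $\text{L}$ is an axiomatic extension of $\text{MTL}_\Delta$ with $\text{TAUT}_\text{L}$ decidable. The cleanest strategy is to mirror exactly the proof of \Cref{noc}, replacing each appeal to the $\Delta$-free machinery with its $\Delta$-analogue. First I would split into the same two cases used in \Cref{noc}: either $\text{L}$ has no generic chain, or it does.

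**Case 1: $\text{L}$ has no generic chain.** Here the argument of \Cref{noc} transfers verbatim. If an $\text{L}$-chain $\mathcal{A}$ fails to be complete w.r.t.\ the propositional logic $\text{L}$, then there is a propositional formula $\varphi(x_1,\dots,x_k)$ with $\mathcal{A}\models\varphi$ but $\nvdash_\text{L}\varphi$, witnessed by some $\text{L}$-chain $\mathcal{B}$ and evaluation $v$ giving value $<1$. Replacing each variable $x_i$ by a unary predicate $P_i(x_i)$ and evaluating over a singleton $\mathcal{B}$-model shows $\psi\notin\text{fTAUT}^{\mathcal{B}}_\forall$ hence $\text{L}\forall\nvdash\psi$, while $\mathcal{A}\models\varphi$ forces $\psi\in\text{fTAUT}^\mathcal{A}_\forall$. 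Nothing in this reduction touches $\Delta$, so it applies unchanged to $\text{MTL}_\Delta$-chains: for any $\text{L}$ without a generic chain, the fSCC already fails.

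**Case 2: $\text{L}$ has a generic chain $\mathcal{A}$.** This is where the $\Delta$-specific input enters, and it is the main point to verify. By (the $\Delta$-modified) \Cref{lem:tr} and the proof of \Cref{main} adapted exactly as in the proof of \Cref{delta}, $\text{fTAUT}^\mathcal{A}_\forall$ is $\Pi_1$ (this is precisely where decidability of $\text{TAUT}_\text{L}$ is used, via the computability of the translations $\varphi^{*,n}$ that now include the clause $(\Delta\varphi)^{*,n}=\Delta(\varphi^{*,n})$). On the other hand, $\text{TAUT}_{\text{L}\forall}$ is $\Sigma_1$, being recursively enumerable from the Hilbert-style calculus. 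The undecidability result of \cite{mono}, which covers every logic between $\text{MTL}\forall$ and $\text{BOOL}\forall$ over languages with at least a binary predicate, applies here as well, since adding $\Delta$ only strengthens the logic and keeps it below $\text{BOOL}\forall$; hence $\text{TAUT}_{\text{L}\forall}$ is undecidable. A $\Sigma_1$ set that is also $\Pi_1$ would be decidable, a contradiction. Therefore $\text{L}\forall$ cannot be complete w.r.t.\ the finite models of $\mathcal{A}$, so fSCC fails in this case too.

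**The obstacle.** The only delicate point is confirming that the undecidability theorem of \cite{mono} genuinely covers the $\Delta$-expanded logics $\text{L}\forall$ sitting between $\text{MTL}_\Delta\forall$ and $\text{BOOL}\forall$, rather than merely the $\Delta$-free interval. I expect this to go through because, over classical logic, $\Delta$ collapses to the identity (as observed in the proof of \Cref{delta}, $\varphi\leftrightarrow\Delta\varphi$ is a classical tautology), so $\text{BOOL}_\Delta\forall$ and $\text{BOOL}\forall$ coincide; thus every $\text{L}\forall$ in question is still squeezed between $\text{MTL}\forall$ and $\text{BOOL}\forall$ once $\Delta$ is present as a (possibly vacuous at the top) connective, and the cited undecidability argument transfers. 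Combining both cases yields that for every axiomatic extension $\text{L}$ of $\text{MTL}_\Delta$ with $\text{TAUT}_\text{L}$ decidable, the fSCC fails for $\text{L}\forall$.
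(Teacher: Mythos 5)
Your proposal is correct, but in the decisive case (a generic chain $\mathcal{A}$) it takes a genuinely different route from the paper; your Case 1 coincides with the paper's. The paper never invokes \cite{mono} in the $\Delta$ setting: it applies \Cref{delta}, which yields full $\Pi_1$-\emph{completeness} of fTAUT$^\mathcal{A}_\forall$ (hardness comes from the translation $t$ that prefixes atomic formulas with $\Delta$, reducing classical finite validity over $\mathbf{2}$ to finite validity over $\mathcal{A}$), and then the contradiction is immediate: a $\Pi_1$-complete set cannot equal the $\Sigma_1$ set of theorems of L$\forall$, since it would then be decidable. This is exactly what $\Delta$ buys: hardness holds uniformly for \emph{every} extension of MTL$_\Delta$ with decidable TAUT$_\text{L}$, whereas in the $\Delta$-free \Cref{noc} hardness was available only for special families (extensions of BL, SMTL, WNM), which is why \cite{mono} was needed there. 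You instead mirror \Cref{noc}: the $\Pi_1$ upper bound plus undecidability from \cite{mono}. This can be made to work, but the ``obstacle'' you flag is real and your resolution should be sharpened: L$\forall$ is not literally a logic between MTL$\forall$ and BOOL$\forall$, because its language contains $\Delta$. The clean fix is to pass to the $\Delta$-free fragment of TAUT$_{\text{L}\forall}$: it contains all theorems of MTL$\forall$, and it is contained in TAUT$_{\text{BOOL}\forall}$ because $\{0,1\}$ equipped with $\delta$ (which is the identity there) is a subalgebra of every nontrivial L-chain, so soundness gives classical validity of all $\Delta$-free theorems; by \cite{mono} that fragment is undecidable, and hence so is TAUT$_{\text{L}\forall}$, since the fragment is its intersection with a recursive set of formulas. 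With that patch your argument is sound; the paper's route buys independence from \cite{mono} and from this language-transfer step, at the price of using the stronger hardness result of \Cref{delta}.
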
 
\begin{proof}
Let L be a such logic: since it is a $\Delta$-core fuzzy logic, in the sense of \cite{dist}, then both L and L$\forall$ are complete w.r.t. the class of all L-chains (and all models, for L$\forall$). 

Note that every L-chain $\mathcal{A}$ that is not complete w.r.t. L cannot be complete w.r.t. L$\forall$, even if we restrict to finite models. Indeed, for a such chain there is a propositional formula $\varphi(x_1,\dots,x_k)$ such that $\mathcal{A}\models \varphi(x_1,\dots,x_k)$ and $\not\vdash_\text{L}\varphi(x_1,\dots,x_k)$. Hence there is an L-chain $\mathcal{B}$ and an evaluation $v$ such that $v(\varphi(x_1,\dots,x_k))<1$. Let $\psi$ be the first order formula obtained from $\varphi(x_1,\dots,x_k)$ by replacing every variable $x_i$ with a unary predicate $P_i(x_i)$. Take now a $\mathcal{B}$-model $\mathbf{M}$ and an evaluation $w$ such that $M=\{c\}$, and $\Vert P_i(x_i)\Vert_{\mathbf{M},w}^\mathcal{B}=v(x_i)$: clearly $\mathcal{B}\not\models\psi$, and hence $\text{L}\forall\not\vdash\psi$. Since $\mathcal{A}\models\varphi(x_1,\dots,x_k)$ it easy to check that $\Vert \psi\Vert_{\mathbf{M}',w'}^\mathcal{A}$, for every finite $\mathcal{A}$-model $\mathbf{M}'$, and every evaluation $w'$: hence $\psi\in\text{fTAUT}^\mathcal{A}_\forall$.

Conversely, if an L-chain $\mathcal{A}$ is complete w.r.t. L, then by \Cref{delta} we have that fTAUT$^\mathcal{A}_{\forall}$ is $\Pi_1$-complete. Since the set of all theorems of L$\forall$ is $\Sigma_1$\footnote{If we consider the case of full vocabulary, i.e. countable predicates, countable functions symbols, and countable constant symbols, then such set is known to be $\Sigma_1$-complete (\cite{mn,bn}). However, since we are working on a language having only predicates, the proof given in \cite{mn} cannot be applied: in every case, the set of all theorems of L$\forall$ is at least recursively enumerable, and hence it belongs to $\Sigma_1$.}, then L$\forall$ cannot be complete w.r.t. the finite models of $\mathcal{A}$.

We conclude that if L is an axiomatic extension of MTL$_\Delta$ such that TAUT$_\text{L}$ is decidable, then the fSCC fails to holds, for L$\forall$.
\end{proof}
\paragraph{Acknowledgements}
The author would like to thank to Félix Bou for pointing out an error in the previous version of the article, and for the references \cite{bou,bn}.

Other thanks are due to Petr Cintula and Yuri Gurevich, for some informations about the results of, respectively, \cite{lcomp} and \cite{bgg}.
\bibliography{trakbib}
\bibliographystyle{amsplain}
\end{document}